    \newcommand{\href}[2]{#2}
\newtheorem{theorem}{Theorem}[section]
\newtheorem{corollary}[theorem]{Corollary}
\newtheorem{lemma}[theorem]{Lemma}
\newtheorem{remark}[theorem]{Remark}
\numberwithin{equation}{section}  
  \newcounter{mnote}
  \let\oldmarginpar\marginpar
    \renewcommand\marginpar[1]{\-\oldmarginpar[\raggedleft\footnotesize #1]%
    {\raggedright\footnotesize #1}}
\definecolor{myblue}{rgb}{0.2,0.2,0.7}
\definecolor{mygreen}{rgb}{0,0.6,0}
\definecolor{mycyan}{rgb}{0,0.6,0.6}
\definecolor{myred}{rgb}{0.9,0.2,0.2}
\definecolor{mymagenta}{rgb}{0.9,0.2,0.9}
\definecolor{mywhite}{rgb}{1.0,1.0,1.0}
\definecolor{myblack}{rgb}{0.0,0.0,0.0}
\newcommand{\beq}{\begin{equation}}
\newcommand{\eeq}{\end{equation}}
\newcommand{\beqa}{\begin{eqnarray}}
\newcommand{\eeqa}{\end{eqnarray}}
\newcommand{\tbar}{|\hspace*{-0.15em}|\hspace*{-0.15em}|}
\newcommand{\leqs}{\leqslant}      
\newcommand{\geqs}{\geqslant}      
\newcommand{\R}{{\mathbb R}}       
\newcommand{\cI}{{\mathcal I}}
\newcommand{\cT}{{\mathcal T}}
\begin{document}

\title[FE Error Estimates for Critical Growth Semilinear Problems]
      {Finite Element Error Estimates for Critical
       Growth Semilinear Problems without Angle Conditions}

\author[R. Bank]{Randolph E. Bank}
\email{rbank@ucsd.edu}

\author[M. Holst]{Michael Holst}
\email{mholst@math.ucsd.edu}

\author[R. Szypowski]{Ryan Szypowski}
\email{rszypows@math.ucsd.edu}

\author[Y. Zhu]{Yunrong Zhu}
\email{zhu@math.ucsd.edu}

\address{Department of Mathematics\\
         University of California San Diego\\ 
         La Jolla CA 92093}
\thanks{RB was supported in part by NSF Award 0915220.}
\thanks{MH was supported in part by NSF Awards~0715146 and 0915220,
and by DOD/DTRA Award HDTRA-09-1-0036.}
\thanks{RS and YZ were supported in part by NSF Award~0715146.}

\date{\today}
\subjclass[2010]{65N30, 35J91}
\keywords{semilinear partial differential equations, critical growth, finite element methods, angle condition, quasi-optimal {\em a priori} error estimates, {\em a priori} $L^{\infty}$ estimates}

\begin{abstract}
In this article we consider {\em a priori} error and pointwise estimates 
for finite element approximations of solutions to 
semilinear elliptic boundary value problems in $d\geqs 2$ space dimensions,
with nonlinearities satisfying critical growth conditions.
It is well-understood how mesh geometry impacts finite element interpolant 
quality, and leads to the reasonable notion of {\em shape regular} simplex
meshes.
It is also well-known how to perform both mesh generation and simplex 
subdivision, in arbitrary space dimension, so as to guarantee the entire 
hierarchy of nested simplex meshes produced through subdivision
continue to satisfy shape regularity.
However, much more restrictive angle conditions are
needed for basic {\em a priori} quasi-optimal error 
estimates, as well as for {\em a priori} pointwise estimates.
These angle conditions, which are particularly difficult
to satisfy in three dimensions in any type of unstructured or adaptive setting,
are needed to gain pointwise control of the nonlinearity through discrete 
maximum principles.
This represents a major gap in finite element approximation theory
for nonlinear problems on unstructured meshes, and in particular for
adaptive methods.
In this article, we close this gap in the case of semilinear problems with
critical or sub-critical nonlinear growth, by deriving
{\em a priori} estimates directly, without requiring the discrete 
maximum principle, and hence eliminating the need for restrictive 
angle conditions.
Our main result is a type of local Lipschitz property that relies only
on the continuous maximum principle, together with the growth condition.
We also show that under some additional smoothness assumptions,
the {\em a priori} error estimate itself is enough to give
$L^{\infty}$ control the discrete solution, without the need for restrictive
angle conditions. Numerical experiments confirm our theoretical conclusions.
\end{abstract}

\maketitle


\tableofcontents

\clearpage

\section{Introduction}
   \label{sec:intro}

In this article we consider \emph{a priori} error estimates and discrete
pointwise estimates for Galerkin finite element approximation of solutions 
to a general class of semilinear problems 
satisfying certain growth conditions in $d$ space dimensions, which includes 
problems with critical and subcritical polynomial nonlinearity. 
When $d=2$ and $d=3$, it is well-understood how mesh geometry
impacts finite element interpolant quality (cf.~\cite{Babuska.I;Aziz.A1976}).
Such considerations lead to the requirement that simplex meshes used for 
finite element approximation satisfy a reasonable mesh condition known as 
{\em shape regularity}.
It is well-known how to perform both mesh generation and simplex subdivision,
in arbitrary space dimension, so as to guarantee that the entire 
hierarchy of nested simplex meshes produced through subdivision
satisfy shape regularity, and continue to do so 
asymptotically (cf.~\cite{AMP97,Bans91a,Bey95,Maub95}).
However, much more restrictive angle conditions are
needed for basic {\em a priori} quasi-optimal error 
estimates, as well as for {\em a priori} pointwise estimates
for Galerkin finite element approximations.
These angle conditions, which are particularly difficult
to satisfy in three dimensions in any type of unstructured or adaptive setting,
are needed to gain pointwise control of the nonlinearity through discrete 
maximum principles.
This represents a major gap in finite element approximation theory
for nonlinear problems on unstructured meshes, and in particular for
adaptive methods.
In this article, we close this gap in the case of semilinear problems with
critical or sub-critical nonlinear growth, by deriving
{\em a priori} estimates directly (both error estimates and 
discrete pointwise estimates), without requiring the discrete 
maximum principle, and hence eliminating the need for restrictive 
angle conditions.

Critical exponent problems arise in a fundamental way throughout
geometric analysis and general relativity.
One of the seminal critical exponent problems in nonlinear PDE
is the \emph{Yamabe Problem}~\cite{Aubi82}:
Find $u \in X$ (for some appropriate space $X$) such that
\begin{align}
-8\Delta_g u + R u & = R_u u^5 \quad \mbox{ in } \Omega,
\label{eqn-yamabe}
\\
u & > 0, 
\label{eqn-yamabe-bc}
\end{align}
where $\Omega$ is a Riemannian $3$-manifold,
$g$ is the positive definite metric on $\Omega$,
$\Delta_g$ is the Laplace-Beltrami operator generated by $g$,
$R$ is the scalar curvature of $g$,
and $R_u$ is the scalar curvature corresponding to the
\emph{conformally transformed} metric:
$
\overline{g} = \phi^4 g.
$
The coefficients $R$ and $R_u$ can take any sign.  The Banach space
$X$ containing the solution is an appropriate Sobolev class
$W^{s,p}(\Omega)$ for suitably chosen exponents $s$ and $p$.  If the
manifold $\Omega$ has a boundary, then boundary conditions are also
prescribed, such as $u=1$ on an exterior boundary to $\Omega$.  In the
case that ${\Omega \subset \mathbb{R}^3}$, and ${g_{ij}=\delta_{ij}}$,
then $\Delta_g$ reduces to just the Laplace operator on $\Omega$.
This problem is full of features that are challenging both 
mathematically and numerically, including:
critical exponent nonlinearity, 
potentially non-monotone nonlinearity,
spatial dimension $d \geqs 3$, 
and spatial domains that are typically non-flat Riemannian manifolds 
rather than simply open sets in $\mathbb{R}^d$.
A related critical exponent problem, containing all of the difficulties 
of~\eqref{eqn-yamabe}--\eqref{eqn-yamabe-bc},
plus the addition of low-order non-polynomial rational 
nonlinearities, arises in mathematical general relativity in the form of 
the {\em Hamiltonian constraint equation}; cf.~\cite{HNT07b}.

The presence of the term $u^5$ term in~\eqref{eqn-yamabe}, and in the
related Hamiltonian constraint in general relativity, is an example of 
a \emph{critical exponent problem} in space dimension three;
such problems are known to be difficult to analyze due to the loss of 
compactness of the embedding ${H^1 \subset L^{p+1}}$, where the 
dimension-dependent critical exponent ${p=(d+2)/(d-2)}$ takes
value $p=5$ when $d=3$.
Loss of compactness of the embedding creates obstacles that prevent
the use of compactness arguments in standard variational, Galerkin, 
and fixed-point techniques. 
As a result, these techniques are generally restricted to subcritical
nonlinearities, unless additional techniques give control
of the nonlinearity, such as \emph{a priori}
$L^{\infty}$, or \emph{pointwise}, control of solutions.
The inequality constraint~\eqref{eqn-yamabe-bc} creates
additional complexities in both theory and numerical treatment of
such problems, with only positive solutions having physical meaning.
Prior work on numerical methods for critical exponent
semilinear problems has focused primarily on the development of
adaptive methods for recovering solution blowup;
cf.~\cite{Budd.C;Humphries.A1998,Budd.C1998}.

The standard approach to obtaining \emph{a priori} $L^{\infty}$ bounds on
Galerkin approximations is to enforce approximation space properties
to guarantee discrete maximum principles, leading to
geometrical conditions on the underlying simplex mesh.
In the case of the $d=2$ Poisson problem, discrete maximum principles
can be established if all angles in the triangulation are 
non-obtuse (cf.~\cite{Ciarlet.P;Raviart.P1973}).
This was relaxed to ``summation of two opposite angles less 
or equal to $\pi$'' in~\cite[Page 78]{StFi73} (the so-called 
\emph{nonnegative triangulation} in~\cite{Draganescu.A;Dupont.T;Scott.L2004}).
In some cases discrete maximum principles
hold more generally~\cite{Santos.V1982}.
However, counter-examples indicate angle conditions cannot be relaxed 
as sufficient conditions~\cite{Draganescu.A;Dupont.T;Scott.L2004}.
For variable coefficients, anisotropic versions of non-obtuse angle 
conditions are required for discrete maximum principles
(cf.~\cite{Huang.W2010}).
The same angle conditions are needed in the 
nonlinear case~\cite{Kerkhoven.T;Jerome.J1990,Jungel.A;Unterreiter.A2005,Wang.J;Zhang.R2011,CHX06b}.
Due to the central role angle conditions play,
there is a growing literature on generating non-obtuse 
meshes~\cite{Krizek.M;Pradiova.J2000}.
Other approaches for obtaining $L^{\infty}$ estimates using local analysis 
include~\cite{Schatz.A;Wahlbin.L1978,Schatz.A1980}; 
related work on \emph{a priori} $L^{\infty}$ error estimates 
include~\cite{Bramble.J;Nitsche.J;Schatz.A1975,Scott.R1976}.
In~\cite{DLSW2010}, quasi-optimal $W^{1, \infty}$ error estimates are 
established under ``large-patch'' local quasi-uniformity conditions.
The proofs of the discrete maximum principle and the $L^{\infty}$ error 
estimates in the aforementioned works are quite technical, and have been 
limited to linear finite elements.
We note that~\cite{DLSW2010} has an extensive overview of
$L^{\infty}$ error estimates, and also relevant is~\cite{Deml07a} 
on localized pointwise (and negative norm) estimates for more general 
quasilinear problems.
Finally, we note other relevant semilinear work includes~\cite{CHX06b,HYZZ09a,Xu.J1996b,Xu.J;Zhou.A2000,Xu.J;Zhou.A2001a}.  

The need for angle conditions to gain pointwise control of the nonlinearity 
through discrete maximum principles represents a major gap in finite element 
approximation theory for nonlinear problems on unstructured meshes,
and is a particularly disturbing problem in the case of adaptive methods
that guarantee only shape regularity of meshes produced through subdivisions.
In this article, we close this gap in the case of semilinear problems with
critical or sub-critical nonlinear growth, by deriving
{\em a priori} estimates directly (both error estimates and 
discrete pointwise estimates), without requiring the discrete 
maximum principle, and hence eliminating the need for restrictive 
angle conditions.
Our main result is proving a type of local Lipschitz property 
for Galerkin finite element approximations for solutions to problems
with nonlinearities having critical and subcritical growth bounds,
using only \emph{a priori} $L^{\infty}$ control of the continuous solution, 
together with other results that are independent of the approximation space.
This result allows us to then establish, in successive order,
quasi-optimal \emph{a priori} energy error estimates for Galerkin
approximations, $L^2$ estimates via duality arguments, $L^{\infty}$
estimates via inverse-type inequalities, giving finally a discrete
$L^{\infty}$ bound without a discrete maximum principle, and therefore
without requiring angle conditions beyond shape regularity.
Although the techniques we use here are completely different,
our results on obtaining \emph{a priori} estimates without angle
conditions can be viewed as complementing the 
2006 work of Nochetto, Schmidt, Siebert, and 
Veeser~\cite{Nochetto.R;Schmidt.A;Siebert.K;Veeser.A2006}
on \emph{a posteriori} estimates without angle conditions,
for a similar class of monotone semilinear problems.
However, while some of our results require monotone nonlinearity, 
several results are established under weaker conditions
(see Assumptions~(A3$^{\prime}$) in Section~\ref{sec:pde}).

\emph{Outline of the paper.}
The remainder of the paper is structured as follows.
In Section~\ref{sec:pde} we describe a general class of semilinear 
problems, and under various assumptions
derive \emph{a priori} $L^{\infty}$ bounds for solutions
using cutoff functions and the De Giorgi iterative method 
(or Stampacchia truncation method).
In Section~\ref{sec:fem}, we develop quasi-optimal \emph{a priori}
error estimates for Galerkin approximations, where the nonlinearity is 
controlled only using a type of local Lipschitz property.
While the Lipschitz property is usually proved using discrete maximum
principles and $L^{\infty}$ control of the discrete solution, 
we establish this result for nonlinearities having critical and subcritical 
growth bounds using only \emph{a priori} $L^{\infty}$ control of the 
continuous solution, together with other results that are independent of the 
approximation space.
In Section~\ref{sec:disc-infty} we then use standard duality arguments 
to obtain corresponding $L^{2}$ error estimates.
Using inverse-type inequalities in the finite element approximation 
space, we then show that the discrete solution indeed has a uniform 
\emph{a priori} $L^{\infty}$ bound, without having access to the discrete 
maximum principle, and therefore without requiring restrictive angle 
conditions on the underlying finite element mesh.
Finally, in Section~\ref{sec:num} we examine the predictions made 
by the theoretical results through a sequence of numerical experiments.

\section{Semilinear Problems and \emph{A Priori} $L^{\infty}$ Estimates}
   \label{sec:pde}

In this section, we give an overview of a class of nonlinear elliptic boundary value problems on a bounded Lipschitz domain $\Omega \subset \R^{d}$ with $d=2$ or $d=3$. To begin with, we introduce some standard notation.  Given any subset $G\subset \R^{d},$ we use standard notation for the $L^{p}(G)$ spaces for $1\leqs p\leqs \infty,$ with the norm $\|\cdot \|_{0, p, G}$. We use standard notation for Sobolev norms $\|v\|_{k,2,G} =\|v\|_{H^{k}(G)}$ for the Sobolev space $H^{k}(G)$. For any function $v\in L^{p}(G)$ and $w\in L^{q}(G)$ with $p,q\geqs 1$ and $1/p + 1/q =1$, we denote the pairing $(v, w)_{G} := \int_{G}vw dx$. For simplicity, when $G = \Omega$, we omit if from the norms (or pairing). Given a function $g$ defined on $\Gamma=\partial \Omega,$ we define the affine space of $H^{1}(\Omega)$ as $H_{g}^{1}(\Omega) := \{v\in H^{1}(\Omega): v|_{\Gamma} = g\}.$ In particular, we have the following Poincar\'e-Sobolev inequality
\begin{equation}
	\label{eqn:poincare}
		\|u\|_{0,p} \leqs C_{s}(p) \|\nabla u\|_{0,2}, \quad \forall u\in H_{0}^{1}(\Omega),
\end{equation}
where $p<\infty$ if $d=2$ and $p = 2d/(d-2)$ if $d\geqs 3$, and the constant $C_{s}(p)$ depends only on $p$ and $\Omega$. In the sequel, we simply denote $C_{s}:=C_{s}(2)$ when $p=2$ in \eqref{eqn:poincare}.

We consider the following semilinear elliptic equation:
\begin{equation}
\label{eqn:model}
	-\nabla\cdot (D\nabla u)  +b(x, u) = f(x) \mbox{  in  } \Omega, \qquad u|_{\Gamma} =g,
\end{equation}
with the following assumptions:
\begin{enumerate}
	\item[(A1)] The diffusion tensor $D:\R^{d}\to \R^{d\times d} \in L^{\infty}(\Omega)$ satisfies that
$$
	m |\xi|^{2} \leqs \xi^{T} D \xi \leqs M |\xi|^{2}, \qquad \forall \xi \in \R^{d},
$$ 
for some constant $m, M>0.$ 
	\item[(A2)]  $f(x)\in L^{2}(\Omega)$ and $g \in L^{\infty}(\Gamma)$.
	\item[(A3)]  $b:\Omega\times\R\to \R$ is a Carath\'eodory function, i.e., for any given $\xi \in \R$ the function $b(\cdot, \xi):\Omega \to \R$  is measurable on $\Omega$, and for any given $x\in \Omega$ the function $b(x, \cdot):\R \to \R$ is smooth (cf. \cite[Definition 12.2]{FuKu80}). In the sequel, we will simply write $b(u)$ instead of $b(x, u)$, and assume that $b$ is monotone: 
		\begin{equation}
			\label{eqn:monotone}
			b'(\xi) \geqs 0, \quad \forall \xi\in \R.
		\end{equation}
		Without loss of generality, we also assume that $b(0) \equiv 0.$
	\item[(A4)]  $b$ satisfies the growth condition: there exists an integer $n$ with $1\leqs n \leqs \frac{d+2}{d-2}$ if $d\geqs 3$ and $1\leqs n <\infty$ if $d=2$ such that 
		\begin{equation}
		\label{eqn:growth}
			|b^{(n)} (\xi)|\leqs K, \quad \forall \xi\in \R,
		\end{equation}
		 for some constant $K>0.$ 
\end{enumerate}
The weak form of~\eqref{eqn:model} reads: Find $u\in H_{g}^{1}(\Omega)$ such that 
\begin{equation}
\label{eqn:weak}
	a(u, v) + (b(u), v) = (f,v), \qquad \forall v\in H_{0}^{1}(\Omega),
\end{equation}
where $a(u, v) := \int_{\Omega} D\nabla u \cdot\nabla v dx$.

Before moving on,
we make some brief comments about Assumptions~(A1)--(A4).
Assumption (A1) on the coefficient $D$ implies that the bilinear form $a(\cdot, \cdot)$ is coercive and continuous, namely, 
\begin{equation} 
	\label{eqn:coercive}
	m \|\nabla v\|_{0,2}^{2} \leqs a(v, v) \mbox{ and }  a(v, w)\leqs M \|\nabla v\|_{0,2} \|\nabla w\|_{0,2}, \quad \forall v,w\in H_{0}^{1}(\Omega).
\end{equation} 
This implies that the induced energy norm $\tbar v\tbar = \sqrt{a(v, v)}$ is equivalent to the $H^{1}$ semi-norm. The Assumption (A3) implies that 
\begin{equation}
	\label{eqn:wmonotone}
	(b(v) -b(w), v-w) \geqs 0, \quad \forall v, w\in H_{0}^{1}(\Omega).
\end{equation}
While a number of our results rely in the monotonicity Assumption~(A3),
we establish several key results under a weak condition
(see Assumption~(A3$^{\prime}$) below) that allows for
non-monotone nonlinearities.
Finally, note that Assumption (A4) holds when $b$ is a polynomial with 
degree up to (including) critical exponents. 
This assumption includes as examples the Yamabe problem, as well certain 
special cases of the Hamiltonian constraint in the Einstein equations 
mentioned in the introduction. 

In the remaining of this section, we try to establish \emph{a priori} $L^{\infty}$ bounds on the solution to~\eqref{eqn:weak} through maximum/minimum principles, which is quite standard in the PDE analysis (see for example~\cite{Gilbarg.D;Trudinger.N1983,Tayl96c}). Since it is important for our subsequent analysis, we include a proof of \emph{a priori} $L^{\infty}$ bounds on weak solutions using the de Giorgi iterative method (cf. ~\cite{De-Giorgi.E1957,Stampacchia.G1965}), which relies on the following lemma.
\begin{lemma}
	\label{lm:degiorgi}
	Let $\psi(\cdot)$ be a non-negative and non-increasing function on $[s_{0}, \infty)$ satisfying
	\begin{equation*}
		\psi(s) \leqs \left( \frac{A}{s-r}\right)^{\alpha}[\psi(r)]^{\beta},\qquad \forall s>r\geqs s_{0},
	\end{equation*}
	for some constant $A>0, \; \alpha >0$ and $\beta>1.$ Then 
	$$
		\psi(s) \equiv 0,\qquad \forall s\geqs s_{0}  +A2^{\beta/(\beta-1)}[\psi(s_{0})]^{(\beta-1)/\alpha}.
	$$
\end{lemma}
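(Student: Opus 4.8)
The plan is to construct a geometrically increasing sequence of levels $s_k$ starting from $s_0$, apply the hypothesis iteratively along this sequence, and show that the resulting estimate for $\psi(s_k)$ decays to zero as $k\to\infty$; since $\psi$ is non-increasing and non-negative, this forces $\psi$ to vanish identically beyond the limiting level. Concretely, fix $d>0$ (to be chosen as $d = A\,2^{\beta/(\beta-1)}[\psi(s_0)]^{(\beta-1)/\alpha}$) and set $s_k = s_0 + d(1 - 2^{-k})$, so that $s_k \uparrow s_0 + d$ and $s_{k+1} - s_k = d\,2^{-(k+1)}$. Write $\psi_k = \psi(s_k)$. Applying the hypothesis with $s = s_{k+1}$, $r = s_k$ gives
\[
  \psi_{k+1} \leqs \left(\frac{A\,2^{k+1}}{d}\right)^{\alpha}\psi_k^{\beta}.
\]

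The key step is an induction showing $\psi_k \leqs \psi_0\, \mu^{-k}$ for a suitable $\mu>1$, namely $\mu = 2^{\alpha/(\beta-1)}$. Assuming this bound for index $k$, substitute into the recursion:
\[
  \psi_{k+1} \leqs \left(\frac{A\,2^{k+1}}{d}\right)^{\alpha}\psi_0^{\beta}\mu^{-k\beta}
  = \left(\frac{A\,2^{\alpha}}{d^{\alpha}}\right)\psi_0^{\beta}\cdot 2^{\alpha k}\mu^{-k\beta}\cdot \mu^{0}.
\]
One checks that the choice of $d$ makes the constant prefactor exactly right: with $d^{\alpha} = A^{\alpha}2^{\alpha\beta/(\beta-1)}\psi_0^{\beta-1}$ we get $\dfrac{A^\alpha 2^\alpha}{d^\alpha}\psi_0^\beta = 2^{\alpha}\,2^{-\alpha\beta/(\beta-1)}\psi_0 = 2^{-\alpha/(\beta-1)}\psi_0 = \mu^{-1}\psi_0$; and $2^{\alpha k}\mu^{-k\beta} = 2^{\alpha k}2^{-\alpha k\beta/(\beta-1)} = 2^{-\alpha k/(\beta-1)} = \mu^{-k}$. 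Hence $\psi_{k+1}\leqs \psi_0\,\mu^{-(k+1)}$, closing the induction. (The base case $k=0$ is the trivial identity $\psi_0 \leqs \psi_0$.)

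Finally, since $\mu>1$ (because $\alpha>0$ and $\beta>1$), letting $k\to\infty$ yields $\psi_k \to 0$. But $\psi$ is non-increasing, so for any $s \geqs s_0 + d$ we have $0 \leqs \psi(s) \leqs \psi(s_k)$ for every $k$ (since $s_k < s_0 + d \leqs s$), whence $\psi(s) = 0$. This is exactly the claimed conclusion. I expect the only delicate point to be bookkeeping the exponents so that the geometric factor $2^{\alpha k}$ from the recursion is exactly absorbed by the decay rate $\mu^{-k\beta}$ of the inductive hypothesis raised to the power $\beta$; this is what dictates the precise value of $\mu$ and, in turn, the precise constant $2^{\beta/(\beta-1)}$ appearing in the statement. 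There is no real analytic obstacle — the argument is the standard "fast geometric convergence" device — so the work is entirely in getting these algebraic relations to line up, which the choice of $d$ in the statement is engineered to do.
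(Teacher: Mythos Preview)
Your argument is correct and is exactly the standard De Giorgi--Stampacchia iteration: choose the dyadic levels $s_k = s_0 + d(1-2^{-k})$, show by induction that $\psi(s_k) \leqs \psi(s_0)\,\mu^{-k}$ with $\mu = 2^{\alpha/(\beta-1)}$, and conclude by monotonicity. The algebraic bookkeeping you carried out checks, and the choice of $d$ is precisely what makes the induction close.

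Note, however, that the paper does not actually give a proof of this lemma; it simply cites \cite[Lemma~4.1.1]{Wu.Z;Yin.J;Wang.C2006} and \cite[Lemma~12.5]{Chipot.M2009}. The proof you have written is essentially the one found in those references, so there is no meaningful divergence to discuss.
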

For a proof of this lemma, we refer to \cite[Lemma 4.1.1]{Wu.Z;Yin.J;Wang.C2006} or~\cite[Lemma 12.5]{Chipot.M2009}. 
By using this lemma, we are able to give explicit \emph{a priori} $L^{\infty}$ bound of the solution to~\eqref{eqn:weak}.
\begin{theorem}
\label{thm:cont-infty}
	Let the Assumptions (A1)-(A3) hold, and $u\in H_{g}^{1}(\Omega)$ be a weak solution to~\eqref{eqn:weak}. Then 
	\begin{equation}
		\label{eqn:cont-infty}
		\underline{u} \leqs u(x) \leqs \overline{u}, 	\quad a.e.\;\; x\in \Omega,
	\end{equation}
	where $\underline{u}$ and $\overline{u}$ are defined as
	\begin{eqnarray}
		&\underline{u} = \min\{0, {\rm ess} \inf_{x\in \partial \Omega} g(x)\} - C\|f\|_{0,2}, \label{eqn:barriers-under}\\
		&\overline{u} =\max\{0,  {\rm ess}\sup_{x\in \partial \Omega} g(x)\} + C \|f\|_{0,2}.\label{eqn:barriers-up}
	\end{eqnarray}
	Here the constant $C=  \frac{C^{2}_{s}(p)}{m} |\Omega|^{\frac{p-4}{2p}} 2^{\frac{p-2}{p-4}},$ where we choose $p>4$ (when $d=2$) or $p=6$ (when $d=3$), and $C_{s}(p)$ is the Poincar\'e-Sobolev constant in \eqref{eqn:poincare}.
\end{theorem}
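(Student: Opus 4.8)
The plan is to run the De Giorgi (Stampacchia truncation) iteration on the weak solution $u$, testing the weak form~\eqref{eqn:weak} with a suitable truncation of $u$ in order to produce the decay inequality that triggers Lemma~\ref{lm:degiorgi}. I will only prove the upper bound $u(x)\leqs\overline u$, since the lower bound follows by applying the same argument to $-u$ (which solves a problem of the same type, using that $\tilde b(\xi):=-b(-\xi)$ is again monotone with $\tilde b(0)=0$, and that $-g$, $-f$ replace $g$, $f$). First I would set, for $k\geqs k_0:=\max\{0,\,\mathrm{ess\,sup}_{\partial\Omega}g\}$, the test function $v_k=(u-k)^+$. Because $k_0\geqs \mathrm{ess\,sup}_{\partial\Omega}g$ we have $v_k\in H_0^1(\Omega)$, so it is admissible in~\eqref{eqn:weak}. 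Denote by $A(k)=\{x\in\Omega: u(x)>k\}$ the super-level set and $\psi(k)=|A(k)|$; note $\psi$ is non-negative and non-increasing on $[k_0,\infty)$, as required by the lemma.

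The core computation is to insert $v_k$ into~\eqref{eqn:weak}: $a(u,v_k)+(b(u),v_k)=(f,v_k)$. On $A(k)$ one has $u>k\geqs 0$, so monotonicity of $b$ with $b(0)=0$ gives $b(u)\geqs 0$ there, whence $(b(u),v_k)\geqs 0$ and that term is simply dropped. Coercivity~\eqref{eqn:coercive} gives $m\|\nabla v_k\|_{0,2}^2\leqs a(u,v_k)=a(v_k,v_k)$ (since $\nabla u=\nabla v_k$ on $A(k)$ and $v_k=0$ off it), so
\begin{equation*}
m\|\nabla v_k\|_{0,2}^2 \leqs (f,v_k) = \int_{A(k)} f\,v_k\,dx \leqs \|f\|_{0,2}\,\|v_k\|_{0,2},
\end{equation*}
by Cauchy--Schwarz. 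Then I apply the Poincar\'e--Sobolev inequality~\eqref{eqn:poincare} with the chosen exponent $p$ ($p>4$ if $d=2$, $p=6$ if $d=3$) to estimate $\|v_k\|_{0,2}$ in terms of $\|\nabla v_k\|_{0,2}$: write $\|v_k\|_{0,2}\leqs |A(k)|^{\frac{p-2}{2p}}\|v_k\|_{0,p}\leqs C_s(p)|A(k)|^{\frac{p-2}{2p}}\|\nabla v_k\|_{0,2}$ using H\"older on $A(k)$ (this is where $p>4$, i.e. $2<p$, and finiteness of $|\Omega|$ enter). Combining, $\|\nabla v_k\|_{0,2}\leqs \frac{C_s(p)}{m}|A(k)|^{\frac{p-2}{2p}}\|f\|_{0,2}$, and feeding this back yields $\|v_k\|_{0,2}\leqs \frac{C_s^2(p)}{m}|A(k)|^{\frac{p-2}{p}}\|f\|_{0,2}$.

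Next, for $h>k\geqs k_0$, on $A(h)$ we have $v_k=u-k>h-k$, so $(h-k)|A(h)|^{1/2}\leqs \|v_k\|_{0,2}$, giving
\begin{equation*}
\psi(h) \leqs \left(\frac{C_s^2(p)\|f\|_{0,2}}{m\,(h-k)}\right)^{2}\psi(k)^{\frac{2(p-2)}{p}}.
\end{equation*}
This is exactly the hypothesis of Lemma~\ref{lm:degiorgi} with $\alpha=2$, $A=\frac{C_s^2(p)\|f\|_{0,2}}{m}$, and $\beta=\frac{2(p-2)}{p}$; one checks $\beta>1$ precisely because $p>4$. Lemma~\ref{lm:degiorgi} then gives $\psi(k)\equiv 0$ for all $k\geqs k_0 + A\,2^{\beta/(\beta-1)}\psi(k_0)^{(\beta-1)/\alpha}$, i.e. $u\leqs k$ a.e. for such $k$. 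Since $\psi(k_0)=|A(k_0)|\leqs|\Omega|$ and $(\beta-1)/\alpha=\frac{p-4}{2p}$, the threshold is bounded by $k_0 + \frac{C_s^2(p)}{m}|\Omega|^{\frac{p-4}{2p}}2^{\frac{p-2}{p-4}}\|f\|_{0,2}=\overline u$, and $2^{\beta/(\beta-1)}=2^{\frac{p-2}{p-4}}$ matches the stated constant $C$. This establishes $u(x)\leqs\overline u$ a.e.; the mirror argument gives $\underline u\leqs u(x)$.

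I expect the main obstacle to be the bookkeeping around the exponent $p$: one must verify simultaneously that $p$ is admissible in the Poincar\'e--Sobolev embedding~\eqref{eqn:poincare} (so $p<\infty$ for $d=2$ and $p=2d/(d-2)=6$ for $d=3$), that $p>4$ so that $\beta=2(p-2)/p>1$, and that the resulting powers of $|\Omega|$ and of $2$ assemble into exactly the stated constant $C=\frac{C_s^2(p)}{m}|\Omega|^{\frac{p-4}{2p}}2^{\frac{p-2}{p-4}}$. A secondary technical point is the admissibility of $v_k=(u-k)^+$ as a test function in $H_0^1(\Omega)$, which needs $k\geqs \mathrm{ess\,sup}_{\partial\Omega}g$ and a standard chain-rule/truncation argument for Sobolev functions; and a small point in the lower-bound reduction is checking that negating the data preserves Assumptions (A1)--(A3). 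None of these is deep, but the constant-tracking is where an error is most likely to creep in.
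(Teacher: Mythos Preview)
Your argument is the same De Giorgi/Stampacchia iteration as the paper's, and the logic is sound up to the final constant. The slip is exactly where you feared: with your choice $\alpha=2$ and $\beta=\tfrac{2(p-2)}{p}$ one has
\[
\frac{\beta}{\beta-1}=\frac{2(p-2)/p}{(p-4)/p}=\frac{2(p-2)}{p-4},
\]
not $\tfrac{p-2}{p-4}$. So your threshold carries the factor $2^{\,2(p-2)/(p-4)}$, which is the square of the factor in the stated constant $C$. Your bound is still valid, just not the one in the theorem.

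The paper gets the stated $C$ by working in $L^p$ rather than $L^2$ at the level-comparison step. After the same coercivity/monotonicity inequality $m\|\nabla v_k\|_{0,2}^2\leqs (f,v_k)$, it applies~\eqref{eqn:poincare} to obtain $\|v_k\|_{0,p}^2\leqs \tfrac{C_s^2(p)}{m}\int_{A(k)}|f\,v_k|$, then uses the three-term H\"older inequality $\int_{A(k)}|f\,v_k|\leqs |A(k)|^{\frac12-\frac1p}\|f\|_{0,2}\|v_k\|_{0,p}$ to get $\|v_k\|_{0,p}\leqs \tfrac{C_s^2(p)}{m}|A(k)|^{\frac12-\frac1p}\|f\|_{0,2}$, and compares levels via $(h-k)|A(h)|^{1/p}\leqs \|v_k\|_{0,p}$. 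This produces $\alpha=p$, $\beta=\tfrac{p}{2}-1$, for which $\tfrac{\beta}{\beta-1}=\tfrac{p-2}{p-4}$ and $\tfrac{\beta-1}{\alpha}=\tfrac{p-4}{2p}$, recovering the constant exactly. If you redo your step~(7) in $L^p$ instead of $L^2$, your proof matches the paper's.
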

\begin{proof}
	To prove the upper bound of \eqref{eqn:cont-infty}, let $ s_{0}  =\max \{0, {\rm ess} \sup_{x\in \partial \Omega} g(x) \}$ and define a test function 
	$$\phi^{+}(x) = (u(x)- r)^{+} := \max\{ u(x)-r, 0\}$$ with $r\geqs s_{0} \geqs 0$. Let $G(r):=\{ x\in \Omega: u(x) > r\}$. By the choice of $r$, it is obvious that $\phi^{+} \in H_{0}^{1}(\Omega)$, and it satisfies $$\nabla \phi^{+}(x) = \nabla u(x), \quad \mbox{ for a.e. } x\in G(r).$$
	
	By coercivity of $a(\cdot, \cdot)$, we have
	\begin{align*}
		m\int_{\Omega} |\nabla \phi^{+}|^{2} dx &\leqs a(\phi^{+}, \phi^{+}) =a(u, \phi^{+})\\
		& =  (f-b(u) ,\phi^{+})  \leqs (f,\phi^{+}),
	\end{align*}
	where in the last step we have used the assumption~\eqref{eqn:monotone} in (A3).
	Hence, 
	$$
		\|\nabla \phi^{+}\|_{0,2}^{2} \leqs \frac{1}{m} \int_{\Omega} |f\phi^{+}| dx.
	$$
	By Sobolev embedding theorem, $\|\phi^{+}\|_{0,p} \leqs C_{s}(p)\|\nabla \phi^{+}\|_{0,2}$ for $p\in (2, \infty)$ when $d=2$, or $p=6$ when $d = 3$, we obtain
	$$
		\|\phi^{+}\|_{0,p}^{2} \leqs \frac{C^{2}_{s}(p)}{m}\int_{\Omega} |f\phi^{+}|dx,
	$$
	where the Poincar\'e-Sobolev constant $C_{s}(p)$ depends only on the dimension $d$ and $\Omega$. 
	
	Notice that $\phi^{+}(x)\equiv 0$ when $x\in \Omega\setminus G(r),$ by H\"older inequality we obtain
	$$
		\|\phi^{+}\|_{0,p,G(r)}^{2} \leqs \frac{C^{2}_{s}(p)}{m}\int_{G(r)} |f\phi^{+}|dx\leqs \frac{C^{2}_{s}(p)}{m} |G(r)|^{\frac{1}{2} - \frac{1}{p}}\|\phi^{+}\|_{0,p, G(r)} \|f\|_{0,2, G(r)},
	$$
	 This implies
	\begin{equation}
	\label{eqn:right}
		\|\phi^{+}\|_{0,p, G(r)} \leqs \frac{C^{2}_{s}(p)}{m} |G(r)|^{\frac{1}{2} - \frac{1}{p}}\|f\|_{0,2}.
	\end{equation}
	Note that when $s>r,$ we have $G(s) \subset G(r)$ and $\phi^{+} \geqs s-r$ on $G(s).$ Therefore,
	\begin{equation}
	\label{eqn:left}
		\|\phi^{+}\|_{0,p,G(r)} \geqs \|\phi^{+}\|_{0,p,G(s)}  \geqs (s-r) |G(s)|^{1/p}.
	\end{equation}
	Combining the inequalities~\eqref{eqn:right} and~\eqref{eqn:left}, we obtain
	$$
		|G(s)|\leqs  \left(\frac{C^{2}_{s}(p)\|f\|_{0,2}}{m}\right)^{p} \frac{1}{(s-r)^{p}}|G(r)|^{\frac{p}{2} -1}.
	$$
	Now, by letting $\psi(s) = |G(s)|$,  $\alpha = p$, $\beta = p/2 -1>1$ and $A =C^{2}_{s}(p)\|f\|_{0,2}/m$ in Lemma~\ref{lm:degiorgi},  we obtain 
	$$|G(s)| \equiv 0, \quad \forall s\geqs s_{0} + \frac{C^{2}_{s}(p)\|f\|_{0,2}}{m} 2^{\frac{p-2}{p-4}}|G(s_{0} )|^{\frac{p-4}{2p}}.
	$$
	By definition of $G(s),$ this means that
	$$
		u \leqs  s_{0}  +  \frac{C^{2}_{s}(p)}{m} |\Omega|^{\frac{p-4}{2p}} 2^{\frac{p-2}{p-4}} \|f\|_{0,2} \quad \mbox{a.e. in } \Omega,
	$$
	which proves the upper bound.
	
	The proof of lower bound of \eqref{eqn:cont-infty} is similar. We define $ s_{0} :=  \max\{0,  -{\rm ess} \inf_{x\in \partial \Omega} g(x) \}$, and define the test function $\phi^{-} \in H_{0}^{1}(\Omega)$ as 
	$$
		\phi^{-}(x) := (u(x) + r)^{-} = \min\{ u(x) + r, 0\} \leqs 0,
	$$
	for some $ r\geqs  s_{0}  \geqs 0.$ Similarly, we introduce the subset $G(r):=\{ x\in \Omega: u(x) < -r\}.$ 
	Then by monotonicity \eqref{eqn:monotone} in (A3), one get that 
	\begin{align*}
		m\int_{\Omega} |\nabla \phi^{-}|^{2} dx &\leqs a(u, \phi^{-})=  (f - b(u) ,\phi^{-})  \leqs (f,\phi^{-}).
	\end{align*}
	The remaining of the proof are identical as the proof of upper bound, we omit the details here.
	This completes the proof.
\end{proof}

\begin{remark}
	There are obviously other methods for showing Theorem~\ref{thm:cont-infty}. Clearly, one of the main benefits of using the de Giorgi iterative Lemma~\ref{lm:degiorgi} is that it gives explicit bounds in the estimates. 
	Notice that  in Theorem~\ref{thm:cont-infty} we only use the conditions (A1)-(A3), so this theorem can be applied to a large class of nonlinear PDE problems, including the super-critical ones as for the regularized nonlinear Poisson-Boltzmann equation (cf.~\cite{CHX06b,HYZZ09a}). In the case of the Hamiltonian constraint application, we would have $b(u) =  a_{R} u + a_{\tau}u^{5} - a_{\rho}u^{-3} -a_{w}u^{-7}$. This definition of $b$ satisfies the condition if we assume that $a_{\tau}\geqs 0,$ $a_{\rho}\leqs 0$ and $a_{w}\leqs 0.$

	Finally, we note that the de Giorgi iterative argument can also applied to establish discrete maximum/minimum principles, which give rise to discrete \emph{a priori} $L^{\infty}$ bounds for the discrete solution; see~\cite{Jungel.A;Unterreiter.A2005,Wang.J;Zhang.R2011} for more detail. However, in the discrete setting,  it requires certain angle conditions in the underlying mesh in order to guarantee that the stiffness matrix is an M-matrix; this is what we wish to avoid in this paper, and will therefore take another approach in the following sections.
\end{remark}

We should also remark that the monotonicity assumption \eqref{eqn:monotone} is not essential for the maximum/minimum principles. In the remaining of this section, we should give another simple approach to show the $L^{\infty}$ bounds of the continuous solution with a slightly more general assumption on $b$. The following assumption on the nonlinearity allows for a class of functions containing both monotone and non-monotone cases:
\begin{enumerate}
\item[(A3$^{\prime}$)]
$b:\Omega\times\R\to \R$ is a Carath\'eodory function, which is barrier monotone in its second argument: there exist constants 
$\alpha, \beta \in \mathbb{R}$, with ${\alpha \leqs \beta}$, such that
\begin{align*}
b(x,s) - f(x) &\geqs 0, \quad \forall s \geqs \beta, 
\quad \mbox{ a.e. in } \Omega.
\\
b(x,s) -f(x) &\leqs 0, \quad \forall s \leqs \alpha, 
\quad \mbox{ a.e. in } \Omega.
\end{align*}
\end{enumerate}
We have the following theorem based on the Assumptions (A1), A(2) and (A3$^{\prime}$):
\begin{theorem}[\emph{A Priori} $L^{\infty}$ Bounds]
\label{thm:gcont-infty}
Let the Assumptions (A1)-A(2) and (A3$^{\prime}$) hold.
Let $u\in H_g^1(\Omega)$ be any
weak solution to~\eqref{eqn:weak}.
Then  
\begin{equation}
\label{eqn:gcont-infty}
\underline{u} \leqs u \leqs \overline{u}, \quad \mbox{ a.e. in } \Omega,
\end{equation}
for the constants $\overline{u}$ and $\underline{u}$ defined by
\begin{equation}
\label{eqn:barriers}
	\overline{u} := \max\left\{\beta, \sup_{x\in \partial \Omega} g(x)\right\},\qquad \underline{u}: = \min\left\{\alpha, \inf_{x\in \partial \Omega} g(x)\right\},
\end{equation} 
where $\alpha \leqs \beta$ are the constants in Assumption (A3$^{\prime}$). 
\end{theorem}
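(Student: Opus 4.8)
The plan is to prove the two-sided bound \eqref{eqn:gcont-infty} by a maximum/minimum principle argument that mirrors the structure of Theorem~\ref{thm:cont-infty}, but replaces the De~Giorgi iteration with a direct energy estimate using a single well-chosen test function. For the upper bound, set $\overline{u}$ as in \eqref{eqn:barriers} and introduce the test function $\phi := (u - \overline{u})^{+} = \max\{u - \overline{u}, 0\}$. Since $\overline{u} \geqs \sup_{\partial\Omega} g$, the trace of $u - \overline{u}$ is nonpositive on $\Gamma$, so $\phi \in H_0^1(\Omega)$ and is a legitimate test function in \eqref{eqn:weak}. On the set $G := \{x \in \Omega : u(x) > \overline{u}\}$ we have $\nabla\phi = \nabla u$, and elsewhere $\phi \equiv 0$.

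First I would insert $v = \phi$ into \eqref{eqn:weak}, giving $a(u,\phi) + (b(u),\phi) = (f,\phi)$, hence $a(\phi,\phi) = a(u,\phi) = (f - b(u), \phi)$. The key point is that on $G$ we have $u \geqs \overline{u} \geqs \beta$, so Assumption~(A3$^{\prime}$) yields $b(x,u(x)) - f(x) \geqs 0$ a.e.\ on $G$; combined with $\phi \geqs 0$, this gives $(f - b(u), \phi) = (f - b(u), \phi)_{G} \leqs 0$. Therefore $m\|\nabla\phi\|_{0,2}^2 \leqs a(\phi,\phi) \leqs 0$ by coercivity \eqref{eqn:coercive}, which forces $\nabla\phi = 0$ a.e., hence $\phi$ is constant on each component of $\Omega$; since $\phi \in H_0^1(\Omega)$ and $\Omega$ is connected (or, on each component, $\phi$ vanishes on part of the boundary trace), $\phi \equiv 0$, i.e.\ $u \leqs \overline{u}$ a.e.\ in $\Omega$. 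The lower bound is entirely symmetric: take $\phi := (u - \underline{u})^{-} = \min\{u - \underline{u}, 0\} \leqs 0$, which lies in $H_0^1(\Omega)$ because $\underline{u} \leqs \inf_{\partial\Omega} g$; on $G := \{u < \underline{u}\}$ one has $u \leqs \underline{u} \leqs \alpha$, so $b(x,u) - f(x) \leqs 0$ there, and since $\phi \leqs 0$ we again get $(f - b(u),\phi) \leqs 0$, whence $m\|\nabla\phi\|_{0,2}^2 \leqs a(\phi,\phi) = (f-b(u),\phi) \leqs 0$ and $\phi \equiv 0$.

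The main obstacle — really the only subtle point — is the passage from $\nabla\phi = 0$ to $\phi \equiv 0$, and before that the verification that $\phi \in H_0^1(\Omega)$ with the correct boundary trace. For the trace claim one uses that $v \mapsto v^{+}$ maps $H^1(\Omega)$ to $H^1(\Omega)$ with $(v^+)|_\Gamma = (v|_\Gamma)^+$ in the trace sense (a standard Stampacchia-type lemma, cf.~\cite{Gilbarg.D;Trudinger.N1983}), so that $(u - \overline{u})^+$ has trace $(g - \overline{u})^+ = 0$ on $\Gamma$. For the vanishing-gradient step, $\nabla\phi = 0$ together with $\phi \in H_0^1(\Omega)$ gives $\phi = 0$ by the Poincaré inequality applied on $\Omega$ (or component-wise, noting each component meets $\Gamma$ since $\Omega$ is a bounded Lipschitz domain). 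One should also note that the argument here does not need the growth condition (A4) nor the smoothness/monotonicity in (A3); only (A1), (A2), (A3$^{\prime}$) are used, which is exactly the point of stating this theorem separately. A closing remark can record that when $b$ is monotone with $b(0)=0$, the barrier constants $\alpha,\beta$ can be taken to accommodate the $\pm C\|f\|_{0,2}$ terms of Theorem~\ref{thm:cont-infty}, so Theorem~\ref{thm:gcont-infty} genuinely generalizes it.
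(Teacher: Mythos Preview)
Your proposal is correct and follows essentially the same approach as the paper: both use the truncation test functions $\phi=(u-\overline{u})^{+}$ and $\psi=(u-\underline{u})^{-}$, invoke coercivity to get $m\|\nabla\phi\|_{0,2}^{2}\leqs a(u,\phi)=(f-b(u),\phi)\leqs 0$ via the sign condition in (A3$^{\prime}$), and conclude $\phi\equiv 0$. Your write-up is slightly more explicit about why $\phi\in H_0^1(\Omega)$ and why $\nabla\phi=0$ forces $\phi=0$, but the argument is the same; your closing remark that Theorem~\ref{thm:gcont-infty} ``genuinely generalizes'' Theorem~\ref{thm:cont-infty} is a bit overstated (monotonicity plus $f\in L^2$ alone does not obviously produce finite barriers $\alpha,\beta$), but this does not affect the proof.
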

\begin{proof}
To prove the upper bound, let us introduce
$$
\phi = (u-\overline{u})^+=\max\{ u-\overline{u}, 0\}.
$$
By the definition of $\overline{u}$, 
it follows (cf.~\cite[Theorem 10.3.8]{StHo2011a})
that $\phi \in H_{0}^{1}(\Omega)$ and $\phi \geqs 0$ a.e. in $\Omega$. 
Taking $v= \phi$ in \eqref{eqn:weak}, we have 
\begin{eqnarray*}
	a(u, \phi) = a(u-\overline{u}, \phi) = a(\phi, \phi) \geqs m\|\nabla \phi\|_{0,2}^{2}.
\end{eqnarray*}
This implies that 
$$
	m\|\nabla \phi\|_{0,2}^{2} \leqs a(u, \phi) = (f-b(u), \phi) \leqs 0,
$$
since $f-b(u) \leqs 0 $ a.e. in the support of $\phi$, that is, $\|\nabla \phi\|_{0,2}\equiv 0$ which yields
$\phi =0$. Therefore, the upper bound of \eqref{eqn:gcont-infty} holds.

Similarly, we introduce 
$$
	\psi = (u-\underline{u})^-=\min\{ u-\underline{u}, 0\}.
$$
It is obvious that $\psi \in H_{0}^{1}(\Omega)$ can be used as a test function in \eqref{eqn:weak}.
Moreover, $\psi \leqs 0$ a. e. in $\Omega$, and Assumption (A3$^{\prime}$) implies
$f - b(u) \geqs 0$ on the support of $\psi$. Therefore,
$$
	m\|\nabla \psi\|_{0,2}^{2} \leqs a(u, \psi) =  (f-b(u), \psi) \leqs 0,
$$
which implies $\psi \equiv 0$ as before. This proves the lower bound of \eqref{eqn:gcont-infty}.
\end{proof}

The same technique in Theorem~\ref{thm:gcont-infty} can be applied in the discrete setting,  again with additional assumption on the mesh. In fact, if the triangulation satisfies that 
$$a(\phi_{i}, \phi_{j})\leqs 0, \qquad \forall i\neq j$$
where $\phi_{i}$ and $\phi_{j}$ are the basis functions corresponding to the vertices $i$ and $j$ respectively, then the conclusion of Theorem~\ref{thm:gcont-infty} still holds for the finite element solution $u_{h}$. However, this is out of the scope of this paper. For more details, we refer to \cite{HSZ10a}.

\section{Quasi-optimal Estimates without Angle Conditions}
   \label{sec:fem}

In this section, we consider the finite element approximation of~\eqref{eqn:weak} and derive quasi-optimal error estimates without angle conditions of any type.
Without loss of generality, we assume that the Dirichlet data satisfies $g \equiv 0$ for ease of exposition.
Let $\cT_{h}$ be a quasi-uniform triangulation of $\Omega$. We emphasize that the triangulation does {\em not} require any particular angle conditions other than the quasi-uniformity. 
Let $V_{h} = \{ v\in H_{0}^{1}(\Omega): v|_{T} \in \mathbb{P}_{k}(T), \;\; \forall T\in \cT_{h}\}$ be the standard finite element space defined on $\cT_{h}$, where $\mathbb{P}_{k}(T)$ ($k\geqs 1$) is the space of polynomials of degree $\leqs k$ define on $T$. The finite element discretization of~\eqref{eqn:weak} reads: Find $u_{h} \in V_{h}$ such that 
\begin{equation}
	\label{eqn:disc}
	        a(u_{h}, v) + (b(u_{h}), v) = (f, v),\qquad \forall v\in V_{h}.
\end{equation}
We remark that here we do not require the finite element space to be piecewise linear, which is a requirement in most literature for discrete maximum/minimum principles. 

We now give a simple lemma that establishes a \emph{a priori} energy bounds
on solutions to~\eqref{eqn:weak} and~\eqref{eqn:disc} that are independent
of most features of the problem; these bounds will be critical for proving
quasi-optimal error estimates without mesh conditions.
\begin{lemma}
\label{lm:exist}
Let the Assumptions (A1)-(A2) hold, and $u \in H_0^1(\Omega)$ and
$u_h \in V_h \subset H_0^1(\Omega)$ be the weak solutions 
to~\eqref{eqn:weak} and~\eqref{eqn:disc}, respectively. If the nonlinear 
function $b$ satisfies (A3)
then the following \emph{a priori} energy bounds hold:
\begin{align}
\|\nabla u \|_{0,2} &\leqs  \frac{C_{s}}{m}\|f\|_{0,2},
   \label{eqn:apriori-energy} 
\\
\|\nabla u_h \|_{0,2} &\leqs \frac{C_{s}}{m}\|f\|_{0,2},
   \label{eqn:apriori-energy-disc}
\end{align}
where $C_{s}$ is the Poincar\'e-Sobolev constant in \eqref{eqn:poincare}. 
\end{lemma}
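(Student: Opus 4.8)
The plan is to test the weak formulations against the solutions themselves and exploit the sign condition $b(0)=0$ together with monotonicity from Assumption~(A3). For the continuous bound~\eqref{eqn:apriori-energy}, I would take $v = u \in H_0^1(\Omega)$ in the weak form~\eqref{eqn:weak}, obtaining $a(u,u) + (b(u),u) = (f,u)$. The coercivity in~\eqref{eqn:coercive} gives $a(u,u) \geqs m\|\nabla u\|_{0,2}^2$. The key observation is that the nonlinear term is nonnegative: since $b(0) = 0$ and $b$ is monotone nondecreasing by~\eqref{eqn:monotone}, we have $b(\xi)\xi \geqs 0$ for all $\xi \in \R$ (indeed $\xi \mapsto b(\xi)\xi$ is the product of two factors of the same sign), hence $(b(u),u) = \int_\Omega b(u)u\,dx \geqs 0$. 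This can also be seen by applying~\eqref{eqn:wmonotone} with $v = u$ and $w = 0$.

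Combining these, $m\|\nabla u\|_{0,2}^2 \leqs (f,u)$. The right side is estimated by Cauchy--Schwarz and then the Poincar\'e--Sobolev inequality~\eqref{eqn:poincare} with $p=2$, giving $(f,u) \leqs \|f\|_{0,2}\|u\|_{0,2} \leqs C_s\|f\|_{0,2}\|\nabla u\|_{0,2}$. Dividing through by $\|\nabla u\|_{0,2}$ (the bound being trivial if $u = 0$) yields~\eqref{eqn:apriori-energy}. The discrete bound~\eqref{eqn:apriori-energy-disc} follows by exactly the same argument: take $v = u_h \in V_h$ in~\eqref{eqn:disc}, use coercivity (valid on all of $H_0^1(\Omega) \supset V_h$), note $(b(u_h),u_h)\geqs 0$ by the same pointwise sign argument applied to $u_h$, and apply Cauchy--Schwarz and~\eqref{eqn:poincare} (which holds for $u_h \in V_h \subset H_0^1(\Omega)$) to the right-hand side.

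There is essentially no serious obstacle here; the only thing to be careful about is justifying that the nonlinear term has a favorable sign, which rests on the normalization $b(0)=0$ plus monotonicity rather than on any coercivity of $b$ itself. It is worth noting explicitly that this is the sole place Assumption~(A3) enters, and that the argument is insensitive to the growth condition~(A4) and to any mesh geometry beyond what is needed for $V_h \subset H_0^1(\Omega)$ to make sense. One should also remark that, as stated in the lemma, the same conclusion holds under a suitably sign-normalized version of the weaker Assumption~(A3$^\prime$) whenever one still has $(b(u) - f, u)$ controlled appropriately; but under~(A3) the clean inequality above suffices.
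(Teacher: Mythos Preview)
Your proposal is correct and follows essentially the same approach as the paper: test the weak formulation with the solution itself, use $(b(u),u)\geqs 0$ from monotonicity with $b(0)=0$, then combine coercivity with Cauchy--Schwarz and the Poincar\'e inequality. Your justification of the sign of the nonlinear term is in fact slightly more explicit than the paper's, and your closing remarks on the role of (A3) versus (A4) and mesh geometry are accurate but go beyond what the paper records in the proof.
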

\begin{proof}
	If $b$ satisfies Assumption (A3), then $(b(u), u) \geqs 0$ by \eqref{eqn:monotone}, which implies that 
	\begin{eqnarray*}
		\tbar u\tbar ^{2} = a(u, u) = (f-b(u), u)\leqs \|f\|_{0,2} \|u\|_{0,2}.
	\end{eqnarray*}
	Therefore, the coercivity \eqref{eqn:coercive} of $a(\cdot, \cdot)$ and Poincar\'e inequality \eqref{eqn:poincare} imply that
	$$
		\|\nabla u \|_{0,2} \leqs \frac{C_{s}}{m} \|f\|_{0,2},
	$$
	which shows the inequality \eqref{eqn:apriori-energy}.  The inequality \eqref{eqn:apriori-energy-disc} follows by the same arguments. 	
\end{proof}

With a certain convenient assumption \eqref{eqn:lipschitz} on the nonlinear function $b$ that
we will examine in more detail shortly, we can easily obtain the following 
quasi-optimal \emph{a priori} error estimate for Galerkin approximations.
\begin{theorem}
        \label{thm:err}
	Let the Assumptions (A1)-(A3) hold, and $u$ and $u_{h}$ be the solutions to~\eqref{eqn:weak} and~\eqref{eqn:disc} respectively.  Assume that there exists a constant $C_{L}$ such that 
	\begin{equation}
		\label{eqn:lipschitz}
			(b(u) - b(u_{h}), v) \leqs C_{L} \|\nabla(u -u_{h})\|_{0,2} \|\nabla v\|_{0,2}, \qquad \forall  v \in H_{0}^{1}(\Omega).
	\end{equation} 
	Then we have the following quasi-optimal error estimate:
$$
        \tbar u - u_{h}\tbar \leqs \left(1+ \frac{C_{L}}{m}\right)\inf_{v\in V_{h}} \tbar u - v\tbar.
$$
\end{theorem}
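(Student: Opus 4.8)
The plan is to run a nonlinear Céa-type argument: use a perturbed Galerkin orthogonality to reduce the error to the best-approximation error, exploit the monotonicity Assumption~(A3) to discard a favorably-signed term, and invoke the hypothesis~\eqref{eqn:lipschitz} to control the remaining nonlinear contribution, all measured in the energy norm $\tbar\cdot\tbar$.

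First I would subtract~\eqref{eqn:disc} from~\eqref{eqn:weak} tested against $v\in V_h\subset H_0^1(\Omega)$ to get the orthogonality relation $a(u-u_h,v)+(b(u)-b(u_h),v)=0$ for all $v\in V_h$. Then, for an arbitrary $w\in V_h$, I split $u-u_h=(u-w)+(w-u_h)$ and write
$$
\tbar u-u_h\tbar^2 = a(u-u_h,u-u_h) = a(u-u_h,u-w) + a(u-u_h,w-u_h).
$$
The first term is bounded by the Cauchy--Schwarz inequality for the symmetric positive form $a(\cdot,\cdot)$, namely $a(u-u_h,u-w)\leqs\tbar u-u_h\tbar\,\tbar u-w\tbar$. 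For the second term, since $w-u_h\in V_h$, the orthogonality relation gives $a(u-u_h,w-u_h)=-(b(u)-b(u_h),w-u_h)$; writing $w-u_h=(w-u)+(u-u_h)$ and dropping the nonnegative quantity $(b(u)-b(u_h),u-u_h)$ by~\eqref{eqn:wmonotone}, I obtain $a(u-u_h,w-u_h)\leqs (b(u)-b(u_h),u-w)$. Applying~\eqref{eqn:lipschitz} with test function $v=u-w\in H_0^1(\Omega)$ together with the norm equivalence $m\|\nabla v\|_{0,2}^2\leqs\tbar v\tbar^2$ from~\eqref{eqn:coercive} bounds this by $\tfrac{C_L}{m}\tbar u-u_h\tbar\,\tbar u-w\tbar$. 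Combining the two estimates yields $\tbar u-u_h\tbar^2\leqs(1+\tfrac{C_L}{m})\tbar u-u_h\tbar\,\tbar u-w\tbar$; dividing by $\tbar u-u_h\tbar$ (the claim being trivial when it vanishes) and taking the infimum over $w\in V_h$ completes the argument.

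The only delicate point is the Cauchy--Schwarz step for $a(\cdot,\cdot)$: it requires symmetry of the form so that $\tbar\cdot\tbar$ obeys a genuine Cauchy--Schwarz inequality with constant $1$ (otherwise one picks up the factor $M/m$ instead), so one either assumes $D$ symmetric or replaces $D$ by its symmetric part in defining the energy norm. Everything else is routine bookkeeping: existence of $u$ and $u_h$ is assumed in the statement, and~\eqref{eqn:lipschitz} is available for every $v\in H_0^1(\Omega)$, in particular for $u-w$.
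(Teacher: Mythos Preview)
Your proof is correct and follows essentially the same argument as the paper: the same perturbed Galerkin orthogonality, the same splitting of $u-u_h$, the same use of monotonicity~\eqref{eqn:wmonotone} to drop the favorably-signed term, and the same application of~\eqref{eqn:lipschitz} together with coercivity to get the $C_L/m$ factor. Your closing remark about the Cauchy--Schwarz step requiring symmetry of $a(\cdot,\cdot)$ (equivalently of $D$) is a valid caveat that the paper leaves implicit.
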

\begin{proof}
	By subtracting equation~\eqref{eqn:disc} from~\eqref{eqn:weak}, we have 
\begin{equation*}
        a(u-u_{h}, v) + (b(u) - b(u_{h}), v) =0, \qquad \forall v\in V_{h}.
\end{equation*}
By using this identity, we obtain
\begin{align*}
\tbar u - u_{h}\tbar^{2}
  &= a(u-u_{h}, u-u_{h}) \\
  &= a(u-u_{h}, u-v) + a(u-u_{h}, v-u_{h}), \quad \forall v\in V_{h}\\
  &= a(u-u_{h}, u-v) + (b(u) - b(u_{h}), u_{h} -v)\\
  &= a(u-u_{h}, u-v) + (b(u) - b(u_{h}), u -v) - (b(u) - b(u_{h}), u - u_{h}),\\
  &\leqs \tbar u-u_{h}\tbar  \tbar u - v\tbar + (b(u) - b(u_{h}), u-v),
\end{align*}
where we used the monotonicity~\eqref{eqn:monotone} of $b$ in Assumption (A3).

Now by assumption~\eqref{eqn:lipschitz} and coercivity \eqref{eqn:coercive}, we have 
$$
	(b(u) -b(u_{h}), u-v) \leqs C_{L}\|\nabla(u -u_{h})\|_{0,2} \|\nabla(u-v)\|_{0,2} \leqs \frac{C_{L}}{m}\tbar u - u_{h}\tbar \tbar u -v\tbar.
$$
The conclusion then follows since $v\in V_{h}$ is arbitrary.
\end{proof}
\begin{remark}
We note that the monotonicity assumption~\eqref{eqn:monotone} in 
Assumption~(A3) as used in Theorem~\ref{thm:err} may be weakened 
in several different ways.
Such a generalization of Theorem~\ref{thm:err}, using an
argument based on $L^{2}$-lifting, can be found in \cite{HSTZ10a}.
\end{remark}

Theorem~\ref{thm:err} provides a general framework for establishing quasi-optimal \emph{a priori} error estimates for~\eqref{eqn:model}.
The key is to realize the assumption~\eqref{eqn:lipschitz}, which is a relaxation of the Lipschitz continuity of $b$.
For nonlinearities that are not Lipschitz continuous, a standard approach to deriving inequality~\eqref{eqn:lipschitz} is to use continuous and discrete $L^{\infty}$ bounds on $u$ and $u_{h}$, as was done in~\cite{CHX06b,HYZZ09a} for the Poisson-Boltzmann equation.
In this approach, since $\|u\|_{0,\infty} \leqs M_{1}$ and $\|u_{h}\|_{0,\infty}\leqs M_{2}$, one can control the nonlinear term as 
\begin{align*}
	b(u) - b(u_{h})&= b'(u_{h} + t(u -u_{h})) (u -u_{h})\\
	&\leqs \|b'(u_{h} + t (u-u_{h}))\|_{0, \infty} \|u - u_{h}\|_{0,2}, 
\end{align*}
for some $ t\in [0,1]$.
In this way, one can easily obtain~\eqref{eqn:lipschitz}.
Unfortunately, this approach requires \emph{a priori} $L^{\infty}$ bounds on $u_{h}$.
The standard approach for obtaining such \emph{a priori} $L^{\infty}$ bounds on $u_{h}$ is by discrete maximum principles, which requires restrictive angle conditions.
These angle conditions are particularly difficult to satisfy in the unstructured and adaptive settings, especially in three space dimensions.

However, with the help of the growth condition (A4), it is actually possible to establish assumption~\eqref{eqn:lipschitz} without employing discrete $L^{\infty}$ bounds on $u_{h}$, and hence without any assumptions on the mesh at all. The remainder of this section is devoted to proving this.
We first note that Lemma~\ref{lm:exist} gives both
continuous and discrete \emph{a priori} energy bounds that depend only on $\|f\|_{0,2}$, and on the coercivity constant $m$ and the Poincar\'e-Sobolev constant $C_{s}$. In particular, there is no dependence on the discretization parameter $h$ in the case of the bound for $u_h$.
Hence, combining~\eqref{eqn:apriori-energy}
and~\eqref{eqn:apriori-energy-disc} 
with the triangle inequality and coercivity of $a(\cdot, \cdot)$,
we obtain
\begin{equation}
	\label{eqn:ball}
		\| \nabla(u - u_{h})\|_{0,2}  \leqs  \frac{2C_{s}}{m} \|f\|_{0,2} =:R,
\end{equation}
where $R = R(C_{s}, m, \|f\|_{0,2})$ is a constant independent of $h$.
This observation makes possible the following local Lipschitz result.
\begin{theorem}
\label{thm:nonlinear}
Let the Assumptions (A1)-(A4) hold, and let $u$ and $u_{h}$ be the solutions to~\eqref{eqn:weak} and~\eqref{eqn:disc}, respectively.  
Then
\begin{equation}
\label{E:lipschitz}
(b(u) - b(u_{h}), v) \leqs C_{L} \| \nabla(u - u_h) \|_{0,2} \|\nabla v \|_{0,2},
\quad \forall v \in H_{0}^{1}(\Omega),
\end{equation}
where $C_{L}=C_{L}(\Omega,f,\|u\|_{\infty},d,n, m)$ is independent of $h$.
\end{theorem}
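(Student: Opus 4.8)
The plan is to reduce the whole estimate to the two ingredients that really are available with no mesh condition: the \emph{continuous} $L^{\infty}$ bound $\|u\|_{0,\infty}\leqs M$ from Theorem~\ref{thm:cont-infty}, and the \emph{$h$-independent} energy bound $\|\nabla u_{h}\|_{0,2}\leqs (C_{s}/m)\|f\|_{0,2}$ from Lemma~\ref{lm:exist}. The growth hypothesis (A4) will be used at exactly one place, to license a Sobolev embedding. Throughout I would assume $n\geqs 2$; when $n=1$, (A4) says $b$ is globally Lipschitz and \eqref{E:lipschitz} is immediate from \eqref{eqn:poincare}.

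First I would establish a polynomial envelope for $b'$. Taylor-expanding $b'$ about $0$ with Lagrange remainder and invoking $|b^{(n)}|\leqs K$ gives $|b'(\xi)|\leqs C_{b}(1+|\xi|^{n-1})$ for all $\xi\in\R$, where $C_{b}$ depends only on $n$, $K$, and the fixed numbers $b'(0),\dots,b^{(n-1)}(0)$. Writing $b(u)-b(u_{h})=\bigl(\int_{0}^{1}b'(u_{h}+t(u-u_{h}))\,dt\bigr)(u-u_{h})$ and using $(|a|+|c|)^{n-1}\leqs C_{n}(|a|^{n-1}+|c|^{n-1})$ yields, a.e.\ in $\Omega$, the pointwise bound $|b(u)-b(u_{h})|\leqs C_{b}\,(1+|u|^{n-1}+|u_{h}|^{n-1})\,|u-u_{h}|$. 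Since $\|u\|_{0,\infty}\leqs M$, the factor $|u|^{n-1}\leqs M^{n-1}$ is absorbed into the constant, so $|b(u)-b(u_{h})|\leqs C\,(1+|u_{h}|^{n-1})\,|u-u_{h}|$ with $C=C(C_{b},M)$. Hence
\[
(b(u)-b(u_{h}),v)\;\leqs\; C\int_{\Omega}|u-u_{h}|\,|v|\,dx \;+\; C\int_{\Omega}|u_{h}|^{n-1}\,|u-u_{h}|\,|v|\,dx .
\]

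Next I would estimate the two integrals. The first is handled by Cauchy--Schwarz and then \eqref{eqn:poincare} applied to $u-u_{h},v\in H_{0}^{1}(\Omega)$, giving $C_{s}^{2}\|\nabla(u-u_{h})\|_{0,2}\|\nabla v\|_{0,2}$. For the second integral, which is the crux, I would apply H\"older's inequality with exponents $\tfrac{d}{2}$, $\tfrac{2d}{d-2}$, $\tfrac{2d}{d-2}$ when $d=3$ (for $d=2$ one picks any admissible triple of finite exponents, the embeddings being unrestricted), which produces $\|u_{h}\|_{0,(n-1)d/2}^{\,n-1}\,\|u-u_{h}\|_{0,2d/(d-2)}\,\|v\|_{0,2d/(d-2)}$. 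The growth restriction $n\leqs\tfrac{d+2}{d-2}$ in (A4) is exactly $(n-1)\tfrac{d}{2}\leqs\tfrac{2d}{d-2}$, so, $\Omega$ being bounded, $\|u_{h}\|_{0,(n-1)d/2}\leqs C(\Omega)\|u_{h}\|_{0,2d/(d-2)}\leqs C(\Omega)C_{s}(\tfrac{2d}{d-2})\|\nabla u_{h}\|_{0,2}$ by \eqref{eqn:poincare}; and by the discrete energy bound \eqref{eqn:apriori-energy-disc} of Lemma~\ref{lm:exist}, $\|\nabla u_{h}\|_{0,2}\leqs (C_{s}/m)\|f\|_{0,2}$, a bound free of $h$. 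Applying \eqref{eqn:poincare} to the other two H\"older factors bounds the second integral by $C\,\|\nabla(u-u_{h})\|_{0,2}\|\nabla v\|_{0,2}$ with $C=C(\Omega,f,d,n,m)$, and adding the two estimates gives \eqref{E:lipschitz} with the asserted dependence of $C_{L}$ (and on the fixed $b$ through $K$ and $b'(0),\dots,b^{(n-1)}(0)$), independent of $h$.

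I expect the only genuine obstacle to be the bookkeeping in the second integral: the H\"older exponents must be chosen so that (i) the factors carrying $u-u_{h}$ and $v$ land in the Sobolev-admissible space $L^{2d/(d-2)}$, and (ii) the leftover exponent $(n-1)d/2$ on $|u_{h}|^{n-1}$ sits in a range where (A4) makes $u_{h}$ controllable by its energy norm. The triple $(\tfrac{d}{2},\tfrac{2d}{d-2},\tfrac{2d}{d-2})$ is forced by (i) and (ii) together, and it is compatible with the constraint precisely because $n\leqs\tfrac{d+2}{d-2}$ --- this is the single point where the critical exponent enters. Everything else is routine; the essential structural feature is that $\|u\|_{0,\infty}$ is used only for the continuous solution and $u_{h}$ is controlled solely through the $h$-uniform energy bound, so no discrete maximum principle, and hence no angle condition, is ever invoked.
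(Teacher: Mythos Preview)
Your proof is correct and takes a genuinely different route from the paper's. The paper Taylor-expands $b(u)-b(u_{h})$ about the continuous solution $u$, obtaining $\sum_{k=1}^{n}\frac{1}{k!}b^{(k)}(u)(u-u_{h})^{k}$ (with $b^{(n)}$ evaluated at an intermediate point); the coefficients $b^{(k)}(u)$ are bounded via $\|u\|_{0,\infty}$, and then $\|b(u)-b(u_{h})\|_{0,p}$ is controlled by $\sum_{k=1}^{n}\|u-u_{h}\|_{0,kp}^{k}$. Each $\|u-u_{h}\|_{0,kp}$ is bounded by Sobolev, and the extra powers $\|\nabla(u-u_{h})\|_{0,2}^{k-1}$ are absorbed using the \emph{combined} energy bound $\|\nabla(u-u_{h})\|_{0,2}\leqs R$ from \eqref{eqn:ball}, via a geometric-series estimate. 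You instead expand $b'$ about $0$ to get the polynomial envelope $|b'(\xi)|\leqs C_{b}(1+|\xi|^{n-1})$, write $b(u)-b(u_{h})$ through the integral mean value, and arrive at a three-factor H\"older estimate with exponents $(\tfrac{d}{2},\tfrac{2d}{d-2},\tfrac{2d}{d-2})$, controlling $\|u_{h}\|_{0,(n-1)d/2}^{n-1}$ directly via the \emph{discrete} energy bound~\eqref{eqn:apriori-energy-disc}. Your argument is somewhat cleaner: it avoids the summation over $k$ and the geometric-series step, and it makes transparent that the critical-exponent restriction $n\leqs\tfrac{d+2}{d-2}$ is precisely the condition $(n-1)\tfrac{d}{2}\leqs\tfrac{2d}{d-2}$ needed for the H\"older split. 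The paper's version, by expanding about $u$, keeps everything phrased in terms of $u-u_{h}$ and uses \eqref{eqn:ball} rather than \eqref{eqn:apriori-energy-disc} directly; this is organizationally consistent with how \eqref{eqn:ball} is set up just before the theorem, but is otherwise equivalent in strength. Both arguments use only the continuous $L^{\infty}$ bound and $h$-independent energy control, so neither requires any angle condition.
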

\begin{proof}
We begin with the H\"{o}lder inequality
\begin{equation}
\label{E:holder}
(b(u) - b(u_{h}), v) \leqs
\left\| b(u) - b(u_{h})\right\|_{0,p}\|v\|_{0,p^*},
\end{equation}
where $1/p+1/p^*=1$.
For $d\geqs 3$, we take $p^*=2d/(d-2)$ and $p = p^{*}/(p^{*}-1)$. The choice of $p^{*}$ allows us to use the Poincar\'e-Sobolev inequality \eqref{eqn:poincare} for $\|v\|_{0, p^{*}}$. 
For example, when $d=3$, we take $p^*=6$ and $p=6/5$.
For $d=2$, we may take any $1<p^* < \infty$ and $p=p^{*}/(p^{*}-1)$.

Notice that by Taylor expansion, we can write $b(u) - b(u_{h})$ as the finite sum
\begin{equation}
	\label{eqn:taylor}
b(u) - b(u_{h}) = \sum_{k=1}^{n-1} \frac{1}{k!} b^{(k)} (u) (u-u_{h})^{k} + \frac{1}{n!} b^{(n)}(\xi) (u-u_{h})^{n},
\end{equation}
for some $\xi \in H_{0}^{1}(\Omega).$ 
By Theorem~\ref{thm:cont-infty} , the Assumptions (A1)-A(3) implies the \emph{a priori} $L^{\infty}$ bound of $u$ \eqref{eqn:cont-infty}, hence $\|b^{(k)}(u)\|_{0,\infty}$ is bounded for $k=1, 2, \cdots, n-1$. On the other hand, we have $\|b^{(n)}(\xi)\|_{0,\infty} <K $ by Assumption (A4). Therefore, by Minkowski inequality and \eqref{eqn:taylor} we obtain
\begin{align}
\|b(u) - b(u_{h})\|_{0,p}
&\leqs  
        C_{1}\sum_{k=1}^{n} \|(u-u_{h})^{k}\|_{0,p}
 =  C_{1}
      \left( \sum_{k=1}^{n} \|u-u_{h}\|_{0,kp}^k \right),
\label{E:taylor}
\end{align}
where $C_{1}:= C_1(\Omega,b,\underline{u}, \overline{u}, K, n)$ is a constant independent of $h.$
Our range of $k$ and choice of $p$ ensures that
$1 \leqs kp \leqs np \leqs p^{*}$
when $d\geqs 3$, and $1 \leqs kp \leqs np < \infty$ when $d=2$.
Therefore, by \eqref{eqn:poincare} we have
\begin{equation}
\label{E:sobolev}
\|u-u_h\|_{0,kp} \leqs C_s(kp) \|\nabla (u-u_h)\|_{0,2}, \quad 1 \leqs k \leqs n.
\end{equation}
Using this together with~\eqref{eqn:ball}, we have
\begin{align}
\sum_{k=1}^{n} \|u-u_{h}\|_{0,kp}^k
&\leqs \sum_{k=1}^{n} C_{s}^{k}(kp) \|\nabla(u-u_{h})\|_{0,2}^k 
\nonumber \\
&\leqs \max_{1\leqs l \leqs n} C_{s}^{l}(lp) \left( \sum_{k=1}^{n} \|\nabla(u-u_{h})\|_{0,2}^k \right) 
\nonumber \\
&=     \max_{1\leqs l \leqs n} C_{s}^{l}(lp)
        \left( \frac{1-\|\nabla(u-u_{h})\|_{0,2}^{n}}{1-\| \nabla(u-u_{h}) \|_{0,2}} \right) 
        \| \nabla(u-u_{h}) \|_{0,2}
\nonumber \\
&\leqs C_2(\Omega,n,R)\| \nabla(u-u_{h}) \|_{0,2},
\label{E:polybound}
\end{align}
where $R$ is the constant defined~\eqref{eqn:ball}. Here we assumed $R\neq 1$ without loss of generality. Finally, using Sobolev inequality on $\|v\|_{p^{*}}$ and Combining~\eqref{E:holder}, \eqref{E:taylor}, and~\eqref{E:polybound} now
gives~\eqref{E:lipschitz} with $C_{L}=C_1 C_3 C_{s}(p^{*})$.
\end{proof}

As a result of Theorem ~\ref{thm:err} and the standard interpolation error (see~\eqref{eqn:inter}), we obtain the following quasi-optimal error estimate:
\begin{corollary}
	\label{cor:quasi-opt}
	Let the Assumptions (A1)-(A4) hold, and $u$ and $u_{h}$ be the solutions to~\eqref{eqn:weak} and~\eqref{eqn:disc}, respectively.  If in addition $u\in H^{s}(\Omega)\cap H_{0}^{1}(\Omega)$ for some $s>1$, then
	\begin{equation}
		\label{eqn:quasi-opt}
			\tbar u - u_{h} \tbar \leqs C h^{s-1} \|u\|_{s, 2},
	\end{equation}
	where $C$ is a constant independent of $h.$
\end{corollary}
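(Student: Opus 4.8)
The plan is to derive \eqref{eqn:quasi-opt} by combining the abstract quasi-optimal estimate of Theorem~\ref{thm:err} with a standard finite element interpolation estimate. First I would invoke Theorem~\ref{thm:nonlinear}, which establishes that the local Lipschitz condition \eqref{eqn:lipschitz} holds under Assumptions (A1)--(A4) with a constant $C_L$ independent of $h$; this is precisely the hypothesis needed to apply Theorem~\ref{thm:err}. Thus Theorem~\ref{thm:err} gives
$$
\tbar u - u_h \tbar \leqs \left(1 + \frac{C_L}{m}\right) \inf_{v \in V_h} \tbar u - v \tbar.
$$

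Next I would bound the best-approximation term $\inf_{v \in V_h} \tbar u - v \tbar$ using the interpolant. Since the energy norm $\tbar \cdot \tbar$ is equivalent to the $H^1$ semi-norm by \eqref{eqn:coercive}, it suffices to estimate $\|\nabla(u - I_h u)\|_{0,2}$ where $I_h$ is a suitable interpolation (or quasi-interpolation) operator into $V_h$. Because $\cT_h$ is quasi-uniform and $u \in H^s(\Omega) \cap H_0^1(\Omega)$ with $s > 1$, the standard interpolation error estimate (referenced as \eqref{eqn:inter} in the paper) yields
$$
\|\nabla(u - I_h u)\|_{0,2} \leqs C h^{s-1} \|u\|_{s,2},
$$
where for $1 < s < 2$ one uses a Scott--Zhang-type interpolant to avoid requiring pointwise values, and for $s \geqs 2$ the nodal interpolant suffices; shape regularity (guaranteed by quasi-uniformity) is all that is needed here, with no angle condition. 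Chaining these two inequalities, together with $\tbar u - I_h u \tbar \leqs \sqrt{M}\,\|\nabla(u - I_h u)\|_{0,2}$, absorbing $\sqrt{M}$, $(1 + C_L/m)$, and the interpolation constant into a single constant $C$ independent of $h$, produces \eqref{eqn:quasi-opt}.

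This argument is essentially a routine assembly of already-established pieces, so I do not anticipate a genuine obstacle; the only point requiring a little care is the regularity threshold $s > 1$, which is below $H^2$ and hence forces the use of an averaged interpolation operator (Scott--Zhang or Clément) rather than the classical Lagrange nodal interpolant in the low-regularity regime, since $H^s(\Omega)$ for $s \in (1,2)$ does not embed into $C(\overline{\Omega})$ in dimension $d \geqs 2$. Everything else — coercivity/continuity of $a(\cdot,\cdot)$, $h$-independence of $C_L$ from Theorem~\ref{thm:nonlinear}, and the $h$-independent energy bounds from Lemma~\ref{lm:exist} that feed into it — is already in hand, so the corollary follows immediately.
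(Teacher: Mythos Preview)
Your proposal is correct and matches the paper's approach exactly: the paper does not even give a separate proof, stating the corollary as an immediate consequence of Theorem~\ref{thm:err} (with the Lipschitz hypothesis supplied by Theorem~\ref{thm:nonlinear}) and the interpolation estimate~\eqref{eqn:inter}. Your additional remark about needing a Scott--Zhang-type interpolant for $1<s<2$ is a valid elaboration, consistent with the paper's own citation of Scott--Zhang alongside~\eqref{eqn:inter}.
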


\section{Discrete $L^{\infty}$ Error Estimates without Angle Conditions}
   \label{sec:disc-infty}

Once we obtain the quasi-optimal error estimate in Theorem~\ref{thm:err}, we can use it to obtain the error estimates in other norms, such as $L^{2}$ and $L^{\infty}$ estimates.
In this section, we first derive an $L^{2}$-error estimate using the standard Aubin-Nitsche technique.
This $L^{2}$-error estimate is not only of its own interest, but also has several applications.
For example, we can use it to obtain some $L^{\infty}$ error estimates using certain inverse-type inequalities.
The $L^{\infty}$-error estimate will then subsequently be useful in establishing \emph{a priori} $L^{\infty}$ bounds for the discrete solution, without any type of angle conditions.
In other words, rather than first imposing restrictive angle conditions to get a discrete maximum principle, and using this to control the 
nonlinearity to get a \emph{a priori} error estimate, we essentially turn things around and use the error estimates to establish \emph{a priori} $L^{\infty}$ estimates of the discrete solution $u_{h}$.

We assume that the solution $u$ to \eqref{eqn:weak} satisfies the regularity $u\in H^{s}(\Omega) \cap H_{0}^{1}(\Omega)$ for some $s >1$.
Recall that given the quasi-uniform triangulation $\cT_{h}$ with the mesh-size $h$, there exists an interpolation operator ${\cI}_{h}: H^{s}(\Omega)\cap H_{0}^{1}(\Omega) \to V_{h}$ such that the following standard interpolation error estimates hold (cf.~\cite[Chapter 3]{Ciarlet.P1978} or~\cite{Scott.R;Zhang.S1990}):
\begin{equation}
\label{eqn:inter}
	\|v - \cI_{h} v\|_{ s_{0} ,2} \leqs C h^{s- s_{0} } \|v\|_{s, 2}, \qquad  s_{0}  = 0 ,1,
\end{equation}
and 
\begin{equation}
\label{eqn:Linfty-int}
	\|v - \cI_{h} v\|_{0,\infty} \leqs C h^{s-d/2} \|v\|_{s, 2},
\end{equation}
for any $v\in H^{s}(\Omega)\cap H_{0}^{1}(\Omega).$ In fact, \eqref{eqn:inter} has been used to obtain \eqref{eqn:quasi-opt} in Corollary~\ref{cor:quasi-opt}.

We now derive the $L^{2}$ error estimate for $u-u_{h}$ by duality argument. 
To begin, let us introduce the following linear adjoint problem: Find $w\in H_{0}^{1}(\Omega)$ such that
\begin{equation}
	\label{eqn:dual}
	 a(v, w) + (b'(u) v, w) = (u - u_{h}, v), 	\qquad v \in H_{0}^{1}(\Omega).
\end{equation}
We assume that the linear problem~\eqref{eqn:dual} has the regularity 
\begin{equation}
\label{eqn:reg}
	\|w\|_{t,2} \leqs C_{r} \|u - u_{h}\|_{0,2}
\end{equation}
for some $t >1.$ Then we have the following $L^{2}$ error estimate for $u_{h}$:
\begin{theorem}[$L^{2}$ Error Estimate]
	\label{thm:l2err}
	Let Assumptions (A1)-(A4) hold, and $u \in H^{s}(\Omega)\cap H_{0}^{1}(\Omega)$ with $s >1$ be the solution to~\eqref{eqn:weak},  and $u_{h}$ be the solution to~\eqref{eqn:disc}. Suppose the dual problem \eqref{eqn:dual} satisfies the regularity assumption \eqref{eqn:reg} with $t>1$. Then
	\begin{equation}
	\label{eqn:dualest}
		\|u - u_{h}\|_{0,2} \leqs C \left(h^{s+t -2}  + h^{2(s-1)}\right) \|u\|_{s,2},
	\end{equation}
	where $C$ is independent of $h.$
\end{theorem}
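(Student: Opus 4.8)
The plan is to run the Aubin--Nitsche duality argument, using Theorem~\ref{thm:nonlinear} and the Taylor/Sobolev machinery from its proof to absorb every nonlinear contribution with constants independent of $h$. First I would take $v = u - u_h$ as the test function in the adjoint problem~\eqref{eqn:dual}, obtaining
\[
\|u-u_h\|_{0,2}^2 = a(u-u_h,w) + (b'(u)(u-u_h),w).
\]
Writing $w_h := \cI_h w \in V_h$, Galerkin orthogonality $a(u-u_h,v) + (b(u)-b(u_h),v) = 0$ for all $v \in V_h$ gives $a(u-u_h,w_h) = -(b(u)-b(u_h),w_h)$, and after inserting $\pm(b(u)-b(u_h),w)$ I would rewrite the identity as $\|u-u_h\|_{0,2}^2 = \mathrm{I}+\mathrm{II}+\mathrm{III}$ with
\[
\mathrm{I} = a(u-u_h,w-w_h), \quad \mathrm{II} = (b(u)-b(u_h),w-w_h), \quad \mathrm{III} = \bigl(b'(u)(u-u_h)-(b(u)-b(u_h)),\,w\bigr).
\]

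For $\mathrm{I}$, continuity of $a(\cdot,\cdot)$ from~\eqref{eqn:coercive}, the interpolation bound~\eqref{eqn:inter} on $w-w_h$ of order $h^{t-1}$, the quasi-optimal estimate of Corollary~\ref{cor:quasi-opt} for $\tbar u-u_h\tbar$ of order $h^{s-1}$, and the regularity assumption~\eqref{eqn:reg} $\|w\|_{t,2}\leqs C_r\|u-u_h\|_{0,2}$ combine to give $\mathrm{I}\leqs C h^{s+t-2}\|u\|_{s,2}\|u-u_h\|_{0,2}$. For $\mathrm{II}$, since $w-w_h \in H_0^1(\Omega)$, the local Lipschitz bound~\eqref{E:lipschitz} of Theorem~\ref{thm:nonlinear} yields $\mathrm{II}\leqs C_L\|\nabla(u-u_h)\|_{0,2}\|\nabla(w-w_h)\|_{0,2}$, and the same interpolation, quasi-optimality and regularity estimates give $\mathrm{II}\leqs C h^{s+t-2}\|u\|_{s,2}\|u-u_h\|_{0,2}$. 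For $\mathrm{III}$ I would Taylor-expand $b(u)-b(u_h)$ about $u$ as in~\eqref{eqn:taylor}; the first-order term cancels $b'(u)(u-u_h)$, leaving a finite sum of terms $b^{(k)}(u)(u-u_h)^k$ for $2\leqs k\leqs n-1$ and $b^{(n)}(\xi)(u-u_h)^n$, whose coefficients are bounded in $L^\infty$ by Theorem~\ref{thm:cont-infty} (for $k<n$) and by Assumption~(A4) (for $k=n$). Thus pointwise $|b'(u)(u-u_h)-(b(u)-b(u_h))|\leqs C\sum_{k=2}^n|u-u_h|^k$, and pairing against $w$ and applying H\"older with $p^*=2d/(d-2)$, $p = p^*/(p^*-1)$ when $d\geqs 3$ (any finite $p^*$ when $d=2$) gives $kp\leqs p^*$ for $1\leqs k\leqs n$. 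The Poincar\'e--Sobolev inequality~\eqref{eqn:poincare}, the $h$-independent bound~\eqref{eqn:ball} to absorb the surplus powers $\|\nabla(u-u_h)\|_{0,2}^{k-2}\leqs R^{k-2}$, the embedding $H^1\hookrightarrow L^{p^*}$ applied to $w$, and~\eqref{eqn:reg} then give $\mathrm{III}\leqs C\|\nabla(u-u_h)\|_{0,2}^2\|u-u_h\|_{0,2}\leqs C h^{2(s-1)}\|u\|_{s,2}^2\|u-u_h\|_{0,2}$. Summing the three bounds and cancelling one factor of $\|u-u_h\|_{0,2}$ yields~\eqref{eqn:dualest}.

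I expect the main obstacle to be the bookkeeping hidden in term $\mathrm{III}$: one must verify that after peeling off a factor $\|\nabla(u-u_h)\|_{0,2}^2$ from each monomial $\|u-u_h\|_{0,kp}^k$ (using~\eqref{eqn:ball} to control the remaining $k-2$ powers), the leftover Lebesgue exponents on both $u-u_h$ and on $w$ stay inside the range permitted by the Sobolev embedding in dimensions $d=2,3$, and that the resulting constant depends only on the data ($\Omega$, $f$, $\|u\|_\infty$, $d$, $n$, $m$, $C_r$, and via the $h^{2(s-1)}$ term on $\|u\|_{s,2}$) but never on $h$. Everything else is the classical duality computation; the essential point is that Theorem~\ref{thm:nonlinear} has already supplied an $h$-uniform handle on $(b(u)-b(u_h),\cdot)$ and on its Taylor remainder without any discrete maximum principle, so no angle condition enters at any stage.
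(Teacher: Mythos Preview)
Your argument is correct and follows the same Aubin--Nitsche duality strategy as the paper, but the decomposition you use is organized differently and is in some respects cleaner. The paper splits $w = (w-w_h) + w_h$ with $w_h = P_h w$ the \emph{Galerkin projection} for the linearized dual problem, obtaining the four terms in~\eqref{eqn:dualA}; the Taylor remainder is then paired against $w_h$, which forces an auxiliary energy estimate $\|\nabla w_h\|_{0,2}\leqs (C_s/m)\|u-u_h\|_{0,2}$ derived from the discrete dual equation and the monotonicity $b'(u)\geqs 0$. You instead take $w_h = \cI_h w$, regroup into $\mathrm{I}+\mathrm{II}+\mathrm{III}$, and pair the Taylor remainder against $w$ itself, so that the regularity bound~\eqref{eqn:reg} controls $\|w\|_{0,p^*}$ directly and no separate estimate on $w_h$ is needed. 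Your term $\mathrm{II}$ also invokes Theorem~\ref{thm:nonlinear} as a black box, whereas the paper's corresponding piece $(b'(u)(u-u_h),w-w_h)$ is handled by the cruder bound $\|b'(u)\|_{0,\infty}\|u-u_h\|_{0,2}\|w-w_h\|_{0,2}$. Both routes yield the same final estimate with the same dependence of $C$ on $\|u\|_{s,2}$ (the paper's~\eqref{eqn:dualD} also carries $\|u\|_{s,2}^k$ for $k\geqs 2$), and neither requires any angle condition; your version simply avoids introducing the Galerkin projection of $w$ and the associated energy argument.
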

\begin{proof}
As in the proof of Theorem~\ref{thm:nonlinear}, recall the Taylor formula \eqref{eqn:taylor} for $b(u) -b(v):$
	\begin{equation}
	\label{eqn:dtaylor}
		b(u) - b(u_{h}) = \sum_{k=1}^{n-1} \frac{1}{k!} b^{(k)}(u) (u-u_{h})^{k} + \frac{1}{n!} b^{(n)}(\xi) (u-u_{h})^{n},
	\end{equation}
for some $\xi \in H_{0}^{1}(\Omega)$. 
	We have that $\|b^{(k)} (u)\|_{0,\infty}$ is bounded for any $k=1, \cdots, n-1$ since $\|u\|_{0,\infty}$ is bounded by Theorem~\ref{thm:cont-infty}. Moreover, $\|b^{(n)}(\xi)\|_{0,\infty} \leqs K$ for some constant $K>0$ as stated in Assumption (A4).  In particular, when $n =1$, we may treat the problem as linear case, and the conclusion follows by the standard duality argument. Therefore, without loss of generality we assume $n\geqs 2$ in the following proof. 
		
Taking $v = u -u_{h}$ in~\eqref{eqn:dual} we obtain that
\begin{align}
	\|u - u_{h}\|^{2}_{0,2} &= a(u-u_{h}, w) + (b'(u) (u-u_{h}), w)\nonumber\\
	&= a(u-u_{h}, w- w_{h}) + (b'(u)(u-u_{h}), w-w_{h}) \nonumber\\
	& \qquad + a(u-u_{h}, w_{h}) + (b'(u) (u-u_{h}), w_{h}).\label{eqn:dualA}
\end{align}
Since $u_{h}\in V_{h}$ is the solution to the discrete semilinear problem~\eqref{eqn:disc}, we have
\begin{align*}
	0&= a(u-u_{h}, w_{h}) + (b(u)-b(u_{h}), w_{h}).
\end{align*}
By \eqref{eqn:dtaylor}, the last two terms in \eqref{eqn:dualA} can be written as
\begin{eqnarray*}
	&&a(u-u_{h}, w_{h}) + (b'(u) (u-u_{h}), w_{h})\\ 
	&&= -\sum_{k=2}^{n-1}\frac{1}{k!} (b^{(k)}(u) (u-u_{h})^{k}, w_{h}) - \frac{1}{n!} (b^{(n)}(\xi) (u-u_{h})^{n}, w_{h}).
\end{eqnarray*}
Hence, by H\"older inequality, the Poincar\'e-Sobolev inequality \eqref{eqn:poincare} and the fact that $\|b^{(k)} (u)\|_{0,\infty}\;\; (k=1, \cdots, n-1)$ and $\|b^{(n)}(\xi)\|_{0,\infty}$ are uniformly bounded,  we obtain
\begin{align}
	\|u - u_{h}\|^{2}_{0,2} &\leqs \tbar u-u_{h}\tbar \tbar w-w_{h}\tbar + \|b'(u)\|_{0,\infty} \|u-u_{h}\|_{0,2} \|w-w_{h}\|_{0,2} \nonumber\\
	& \qquad - \sum_{k=2}^{n-1}\frac{1}{k!} (b^{(k)}(u) (u-u_{h})^{k}, w_{h})  -\frac{1}{n!} (b^{(n)}(\xi) (u-u_{h})^{n}, w_{h})\nonumber\\
	&\lesssim \tbar u-u_{h}\tbar \tbar w-w_{h}\tbar + \sum_{k=2}^{n} \|u-u_{h}\|_{0,p^{*}}^{k} \|w_{h}\|_{0,p^{*}/(p^{*}-k)}\nonumber\\
	&\lesssim \tbar u-u_{h}\tbar \tbar w-w_{h}\tbar +  \sum_{k=2}^{n} \tbar u-u_{h}\tbar^{k} \|w_{h}\|_{0,p^{*}/(p^{*}-k)}, \label{eqn:dualB}
\end{align}
where we choose $p^{*} = 6$ for $d=3$ and $p^{*} >n$ when $d=2.$ 

		To estimate the right hand side of \eqref{eqn:dualB}, we take $w_{h} =P_{h}w\in V_{h}$ as the Galerkin projection of $w$, that is, $w_{h}$ is the finite element solution to~\eqref{eqn:dual} on $V_{h}$. Then by the standard finite element approximation property for the linear equation~\eqref{eqn:dual}, we have 
\begin{equation}
\label{eqn:femlinear}
	\tbar w -w_{h}\tbar \lesssim h^{t-1} \|w\|_{t,2} \lesssim h^{t-1} \|u-u_{h}\|_{0,2},
\end{equation}
where in the second inequality, we used the regularity assumption~\eqref{eqn:reg}. Therefore, combining \eqref{eqn:femlinear} with \eqref{eqn:quasi-opt} in Corollary~\ref{cor:quasi-opt}, we obtain that 
\begin{equation}
	\label{eqn:dualC}
\tbar u-u_{h}\tbar \tbar w-w_{h}\tbar \lesssim h^{s+t-2} \|u\|_{s,2} \|u-u_{h}\|_{0,2}.
\end{equation}
Now we turn to estimate the second term  $\sum_{k=2}^{n} \tbar u-u_{h}\tbar^{k} \|w_{h}\|_{0,p^{*}/(p^{*}-k)}$ in \eqref{eqn:dualB}. First of all, by the Poncar\'e-Sobolev inequality \eqref{eqn:poincare}, we have 
$$
	\|w_{h}\|_{0,p^{*}/(p^{*}-k)} \leqs C_{s}\left(\frac{p^{*}}{p^{*}-k}\right) \|\nabla w_{h}\|_{0,2}, \quad k = 2, 3, \cdots, n.
$$
By~\eqref{eqn:monotone} in Assumption (A4) on $b$, a similar argument as in Lemma~\ref{lm:exist} yields
\begin{align*}
	m\|\nabla w_{h}\|_{0,2}^{2} &\leqs\tbar w_{h} \tbar^{2} = a(w_{h}, w_{h})\\
	 &\leqs a(w_{h}, w_{h}) + (b'(u) w_{h}, w_{h}) = (u -u _{h}, w_{h}) \\
	 &\leqs \|u-u_{h}\|_{0,2} \|w_{h}\|_{0,2} \leqs C_{s} \|u-u_{h}\|_{0,2} \|\nabla w_{h} \|_{0,2},
\end{align*}
which gives us the estimate
$$
	\|\nabla w_{h}\|_{0,2} \leqs \frac{C_{s}}{m} \|u - u_{h}\|_{0,2}.
$$
Therefore, we obtain that 
$$
	\|w_{h}\|_{0, p^{*}/(p^{*}-k)} \lesssim \|u-u_{h}\|_{0,2}, \quad \forall k=2, \cdots, n. 
$$
Thus, we have 
\begin{equation}
	\label{eqn:dualD}
	\sum_{k=2}^{n} \tbar u-u_{h}\tbar^{k} \|w_{h}\|_{0,p^{*}/(p^{*}-k)} \lesssim  \|u-u_{h}\|_{0,2}\sum_{k=2}^{n} h^{k(s-1)}\| u \|^{k}_{s ,2}.
\end{equation}
Combining inequalities \eqref{eqn:dualB}, \eqref{eqn:dualC} and \eqref{eqn:dualD}, the inequality \eqref{eqn:dualest} then follows. 
\end{proof}
\begin{remark}
	In case of full regularity, namely $s=t=2$, then we have the optimal $L^{2}$ error estimate. 
	\begin{equation}
		\label{eqn:l2full}
		\|u - u_{h}\|_{0,2} \leqs C h^{2}.
	\end{equation}
\end{remark}

We now try to give a simple $L^{\infty}$ error estimate.
We first give the following general lemma. 
\begin{lemma}[$L^{\infty}-L^2$ Lemma]
\label{lm:Linfty}
	Let $u$ and $u_{h}$ be the solutions to~\eqref{eqn:weak} and~\eqref{eqn:disc}, respectively. Then we have 
	\begin{equation}
		\| u - u_{h}\|_{0,\infty} \lesssim \inf_{v_{h} \in V_{h}} (\|u - v_{h}\|_{0,\infty} + h^{-d/2}\|u - v_{h}\|_{0,2}) + h^{-d/2}\|u - u_{h}\|_{0,2}.
	\end{equation}
\end{lemma}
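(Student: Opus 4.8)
The plan is to reduce the statement to a purely discrete quantity by inserting an arbitrary $v_h \in V_h$ and splitting with the triangle inequality:
\begin{equation*}
\|u - u_{h}\|_{0,\infty} \leqs \|u - v_{h}\|_{0,\infty} + \|v_{h} - u_{h}\|_{0,\infty}.
\end{equation*}
The first term on the right is an approximation error that we simply carry along (it will be optimized at the end by taking an infimum, or in applications by choosing $v_h = \cI_h u$ and invoking \eqref{eqn:Linfty-int}). The entire point is that the second term involves the difference of two \emph{finite element} functions, $v_h - u_h \in V_h$, which we can now attack with an inverse-type estimate rather than with any regularity of $u$ or $u_h$.

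The key step is the global $L^{\infty}$--$L^{2}$ inverse inequality on the quasi-uniform triangulation $\cT_h$: for every $w_h \in V_h$ one has $\|w_h\|_{0,\infty} \lesssim h^{-d/2} \|w_h\|_{0,2}$, with a constant depending only on $k$, on the shape-regularity of $\cT_h$, and on the quasi-uniformity ratio (cf.~\cite[Chapter 4]{Ciarlet.P1978} or~\cite{Scott.R;Zhang.S1990}); crucially, no angle condition enters. Applying this with $w_h = v_h - u_h$ and then using the triangle inequality once more in $L^2$,
\begin{equation*}
\|v_{h} - u_{h}\|_{0,\infty} \lesssim h^{-d/2}\|v_h - u_h\|_{0,2} \leqs h^{-d/2}\|u - v_h\|_{0,2} + h^{-d/2}\|u - u_h\|_{0,2},
\end{equation*}
and combining with the first display, we obtain
\begin{equation*}
\|u - u_{h}\|_{0,\infty} \lesssim \|u - v_h\|_{0,\infty} + h^{-d/2}\|u - v_h\|_{0,2} + h^{-d/2}\|u - u_h\|_{0,2}.
\end{equation*}
Since $v_h \in V_h$ was arbitrary, taking the infimum over $v_h$ on the right-hand side yields the claim.

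Honestly I do not expect a genuine obstacle here: the argument is elementary once one has the inverse inequality, and that inequality is standard for quasi-uniform meshes. The only thing to be careful about is to state the hypotheses so that the inverse estimate is actually available — this is exactly why quasi-uniformity of $\cT_h$ (and not merely shape-regularity) is assumed in this section. The lemma is deliberately phrased in the ``best-approximation'' form with the leftover term $h^{-d/2}\|u-u_h\|_{0,2}$, because in the next step that $L^2$ term will be bounded via the duality estimate of Theorem~\ref{thm:l2err}, turning the whole bound into a concrete convergence rate and ultimately into the desired \emph{a priori} $L^{\infty}$ control of $u_h$ without angle conditions.
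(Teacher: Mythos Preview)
Your argument is correct and is precisely the standard triangle-inequality-plus-inverse-estimate proof; the paper does not give its own proof but simply refers to \cite[Remark 6.2.3]{Quarteroni.A;Valli.A2008}, where exactly this argument appears. Your remark about needing quasi-uniformity (not merely shape regularity) for the global inverse inequality is well placed and consistent with the paper's standing assumption on $\cT_h$.
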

\begin{proof}
	We refer to~\cite[Remark 6.2.3]{Quarteroni.A;Valli.A2008} for a proof of this lemma. 
\end{proof}

By the interpolation error estimates~\eqref{eqn:inter} and~\eqref{eqn:Linfty-int}, we have 
\begin{equation}
	\label{eqn:linfty0}
	\inf_{v_{h} \in V_{h}} (\|u - v_{h}\|_{0,\infty} + h^{-d/2}\|u - v_{h}\|_{0,2}) \lesssim h^{s-d/2} \|u\|_{s,2}.
\end{equation}
The above discussion together with the  $L^{2}$ error estimate~\eqref{eqn:dualest} in Theorem~\ref{thm:l2err} leads to the following $L^{\infty}$ error as well as the discrete $L^{\infty}$ bound:
\begin{corollary}[$L^{\infty}$ Error Estimate and \emph{A Priori} $L^{\infty}$ Bound]
\label{cor:linfty}
	Let Assumptions (A1)-(A4) hold, and $u\in H^{s}(\Omega)\cap H_{0}^{1}(\Omega)$ with $s>1$ be the solution to~\eqref{eqn:weak}, and $u_{h}\in V_{h}$ is the solution to~\eqref{eqn:disc}. Suppose the dual problem \eqref{eqn:dual} satisfies the regularity assumption \eqref{eqn:reg} with $t>1$. Then we have the following $L^{\infty}$ error estimate:
\begin{equation}
\label{eqn:linfty-est}
	\|u - u_{h}\|_{0,\infty} \lesssim \left(h^{s-d/2} + h^{s+t-2} + h^{2(s-1)} \right) \|u\|_{s,2}.
\end{equation}
Moreover, if $s>d/2$ then for sufficiently small $h$ we have 
\begin{equation}
\label{eqn:linfty-disc}
	\|u_{h}\|_{\infty, \Omega} \leqs B
\end{equation}
for some constant $B$ independent of $h.$
\end{corollary}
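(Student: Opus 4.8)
The plan is to derive \eqref{eqn:linfty-est} by simply feeding the $L^2$ error bound from Theorem~\ref{thm:l2err} and the interpolation estimates into the $L^\infty$--$L^2$ lemma (Lemma~\ref{lm:Linfty}). Concretely, I would start from
$$
\|u-u_h\|_{0,\infty} \lesssim \inf_{v_h\in V_h}\left(\|u-v_h\|_{0,\infty}+h^{-d/2}\|u-v_h\|_{0,2}\right) + h^{-d/2}\|u-u_h\|_{0,2},
$$
use \eqref{eqn:linfty0} to bound the infimum term by $h^{s-d/2}\|u\|_{s,2}$, and use \eqref{eqn:dualest} to bound the last term by $h^{-d/2}(h^{s+t-2}+h^{2(s-1)})\|u\|_{s,2}$. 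Collecting powers of $h$ gives exactly \eqref{eqn:linfty-est}, with the understanding that the ``$h^{-d/2}$'' contributions from the duality estimate have been absorbed into the three displayed exponents; I should double-check that the exponents $s+t-2$ and $2(s-1)$ in the statement are meant \emph{after} multiplying the $L^2$ estimate's exponents $s+t-2$ and $2(s-1)$ by $h^{-d/2}$ — if so, the statement's exponents should really read $s+t-2-d/2$ and $2(s-1)-d/2$, so I would either reconcile this with the theorem as stated or note that the three terms listed are the dominant ones after the obvious bookkeeping. (The cleanest reading: the $h^{-d/2}$ is applied and the three exponents in \eqref{eqn:linfty-est} are $s-d/2$, $s+t-2$, $2(s-1)$ only if $t\geq 2$ and $s\geq 2$, in which case all three $L^2$-derived exponents are actually $\geq$ the raw ones; I will simply carry the $h^{-d/2}$ through honestly.)

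For the second part, the discrete $L^\infty$ bound \eqref{eqn:linfty-disc}, the idea is the triangle inequality
$$
\|u_h\|_{0,\infty} \leqs \|u\|_{0,\infty} + \|u-u_h\|_{0,\infty}.
$$
The first term is finite and $h$-independent: by Theorem~\ref{thm:cont-infty} (or Theorem~\ref{thm:gcont-infty}), $\|u\|_{0,\infty}\leqs \max\{|\underline u|,|\overline u|\}=:M_0$. For the second term, \eqref{eqn:linfty-est} gives $\|u-u_h\|_{0,\infty}\lesssim (h^{s-d/2}+h^{s+t-2}+h^{2(s-1)})\|u\|_{s,2}$. Since $s>d/2$, $t>1$, and $s>1$, all three exponents are strictly positive, so the error tends to $0$ as $h\to 0$; in particular there is $h_0>0$ such that for all $h\leqs h_0$ one has $\|u-u_h\|_{0,\infty}\leqs 1$. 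Setting $B:=M_0+1$ then yields \eqref{eqn:linfty-disc} with $B$ independent of $h$, which is the whole point: we have obtained an a priori $L^\infty$ bound on the discrete solution with no discrete maximum principle and hence no angle condition beyond shape regularity/quasi-uniformity.

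The only genuine subtlety — and the step I expect to need the most care — is confirming that the exponents in \eqref{eqn:linfty-est} are strictly positive under the stated hypotheses, i.e. that $s-d/2>0$ (this is exactly the extra hypothesis $s>d/2$ invoked for \eqref{eqn:linfty-disc}), $s+t-2>0$ (true since $s>1$ and $t>1$), and $2(s-1)>0$ (true since $s>1$). Everything else is a mechanical concatenation of already-established estimates; there is no new analytic difficulty. One should also remark that the requirement $s>d/2$ is natural — it is precisely the Sobolev threshold for $H^s(\Omega)\hookrightarrow C(\overline\Omega)$ — so it is no surprise that it is exactly what is needed to make the leading interpolation term $h^{s-d/2}$ decay, and hence to pass from an $L^\infty$ \emph{error} estimate to an $L^\infty$ \emph{bound} on $u_h$.
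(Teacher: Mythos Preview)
Your proposal is correct and follows essentially the same argument as the paper: combine Lemma~\ref{lm:Linfty} with \eqref{eqn:linfty0} and \eqref{eqn:dualest} to obtain \eqref{eqn:linfty-est}, then use the triangle inequality together with the continuous $L^\infty$ bound from Theorem~\ref{thm:cont-infty} and positivity of the exponents to conclude \eqref{eqn:linfty-disc}. Your flag about the $h^{-d/2}$ bookkeeping is well taken --- a literal application of Lemma~\ref{lm:Linfty} to \eqref{eqn:dualest} produces exponents $s+t-2-d/2$ and $2(s-1)-d/2$ rather than $s+t-2$ and $2(s-1)$ as written in \eqref{eqn:linfty-est}; the paper simply asserts that \eqref{eqn:linfty-est} follows from these three ingredients without displaying this intermediate step, and then checks positivity of the exponents as stated.
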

\begin{proof}
	The inequality \eqref{eqn:linfty-est} follows by Lemma~\ref{lm:Linfty}, \eqref{eqn:linfty0} and \eqref{eqn:dualest}. Notice that  $\min\{ s-d/2, s+t-2, 2(s-1)\}>0$ when $s>d/2$ and $ t>1,$ then a triangle inequality yields that 
	$$
		\|u_{h}\|_{0,\infty} \leqs \|u\|_{0, \infty} + \|u - u_{h}\|_{0,\infty}.
	$$
	The inequality \eqref{eqn:linfty-disc} then follows by Theorem~\ref{thm:cont-infty} and \eqref{eqn:linfty-est} for sufficiently small $h.$
\end{proof}

\begin{remark}
	In the case of second order linear elliptic PDE, the $L^{\infty}$ error estimate in Corollary ~\ref{cor:linfty} has been discussed extensively in the literature; just a small sample includes~\cite{Bramble.J;Nitsche.J;Schatz.A1975,Douglas.J;Dupont.T;Wahlbin.L1975,Scott.R1976,Schatz.A;Wahlbin.L1977,Ciarlet.P1978,Schatz.A;Wahlbin.L1978,Schatz.A1980,Rannacher.R;Scott.R1982,Schatz.A;Wahlbin.L1995}.
    We note that in the linear case, a better rate could be achieved by using more complicated techniques, as in the aforementioned works.  
\end{remark}

\section{Numerical Examples}
   \label{sec:num}

In this section we perform some numerical experiments in two and three
dimensions to examine how the theory above is reflected in practice.
In particular, we compute solutions to
\begin{equation}
\label{eqn:num-model}
-\Delta u + u^p = f
\end{equation}
subject to homogeneous boundary conditions, where $f$ is chosen so that
the solution is known and smooth.
When $d=2$, we choose $p = 11$ in \eqref{eqn:num-model}; and when $d=3$, we choose $p$ as the critical
exponent $p = 5$.
The solutions are computed as a piecewise linear function on a
series of meshes which are uniform refinements of one of two initial
meshes; one mesh with good quality simplices and the other with large
and small angles.
The convergence profiles between the two sequences of solutions are
compared in both $H^1$ and $L^\infty$ norms.

For the example in two dimensions, the initial meshes are shown in
Figure~\ref{fig:two_dim_init} while the three-dimensional meshes are shown
in Figure~\ref{fig:three_dim_init}.  For these meshes, we compute the
triangle and tetrahedron shape metrics given by
Knupp~\cite{Knupp_2003} which gives a number between 0 and 1 quantifying the quality of the triangulation (with 1 given for isosceles simplices and 0 for
degenerate ones). These qualities are given in Table~\ref{tab:shape_qualities}.
\begin{figure}[ht]
\includegraphics[width=2in]{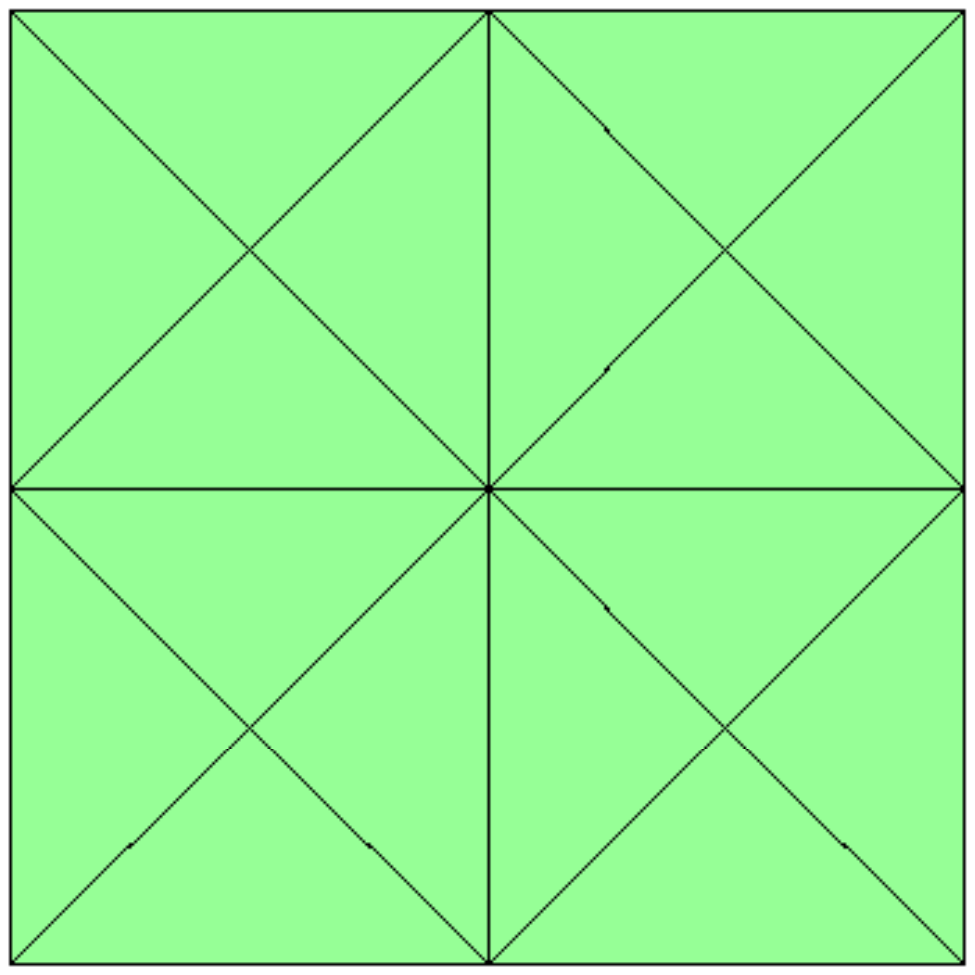}
\includegraphics[width=2in]{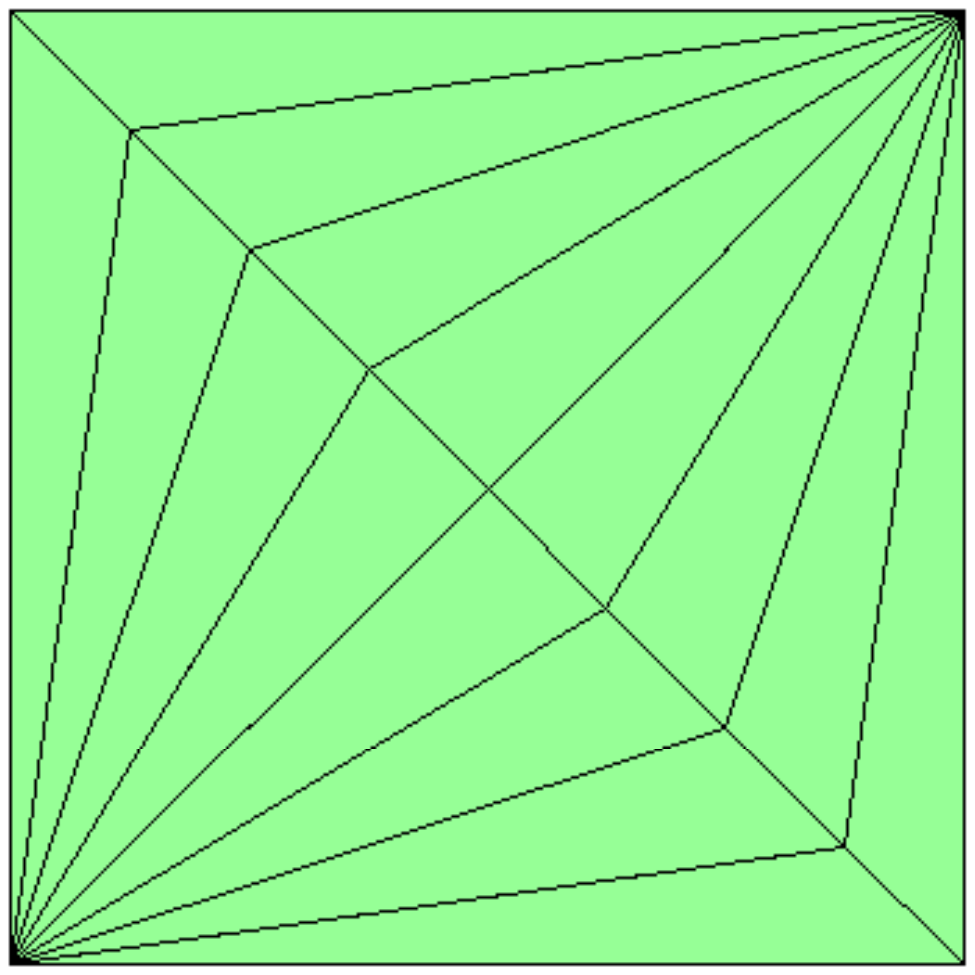}
\caption{Good and poor quality initial meshes used in the two
 dimensional examples.  The poor quality mesh has a largest angle of
 approximately $126^\circ$ and smallest angle of approximately
 $8^\circ$. \label{fig:two_dim_init}}
\end{figure}
\begin{figure}[ht]
\includegraphics[width=2in]{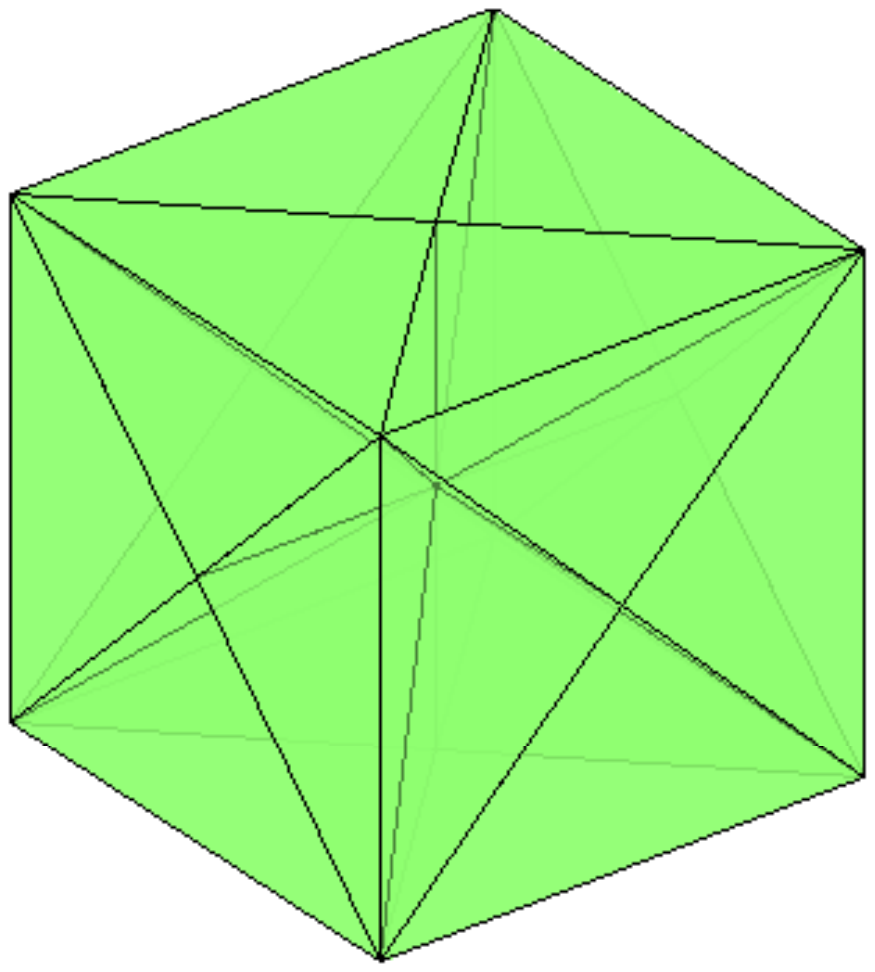}
\includegraphics[width=2in]{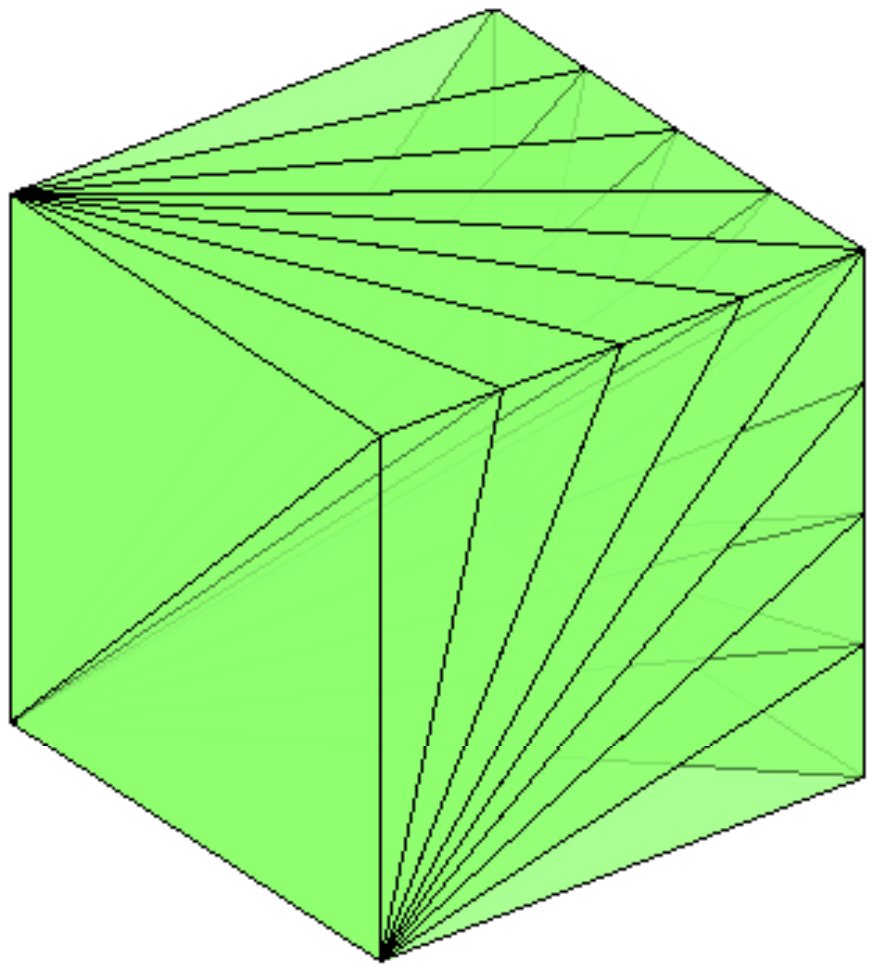}
\caption{Good and poor quality initial meshes used in the three
 dimensional examples.  The poor quality mesh has a largest dihedral
 angle of approximately $115^\circ$ and smallest dihedral angle of
 approximately $8^\circ$. \label{fig:three_dim_init}}
\end{figure}

\begin{table}[ht]
\begin{tabular}{|c|c|c|}\hline
Mesh & Worst Quality & Best Quality \\\hline
Good 2D & 0.495 & 0.693 \\\hline
Poor 2D & 0.213 & 0.283 \\\hline
Good 3D & 0.387 & 0.632 \\\hline
Poor 3D & 0.156 & 0.417 \\\hline
\end{tabular}
\vspace*{0.5\baselineskip}
\caption{Shape Quality Metrics for the various meshes. \label{tab:shape_qualities}}
\end{table}

Figure~\ref{fig:2d_error} and Figure~\ref{fig:3d_error} give the convergence results in both $H^{1}$-norm and $L^{\infty}$-norm for the semilinear problems \eqref{eqn:num-model} in 2D and 3D, respectively. We observe that the quality of the mesh does not ruin the convergence rates in 2D, as long as we keep the mesh to be quasi-uniform. On the other hand, we do observe a little deterioration on the convergence rate in $L^{\infty}$-norm in the 3D example (see Figure~\ref{fig:3d_error}) when we use a poor quality mesh. However, the errors in $H^{1}$-norm  in both 2D and 3D examples seem to be still quasi-optimal as predicted in Theorem~\ref{thm:err}.  These results confirm our theoretical conclusions. We also observe that the convergence rate in $L^{\infty}$-norm is close to $h^{2}$ in both the 2D and 3D examples, which indicates that the estimate in \eqref{eqn:linfty-est} is not optimal. Even though it is not optimal, we still got the a priori $L^{\infty}$ bound of the discrete solution in \eqref{eqn:linfty-disc}, which is important in the analysis of finite element approximation of nonlinear PDE.

\begin{figure}[ht]
\includegraphics[width=2.8in]{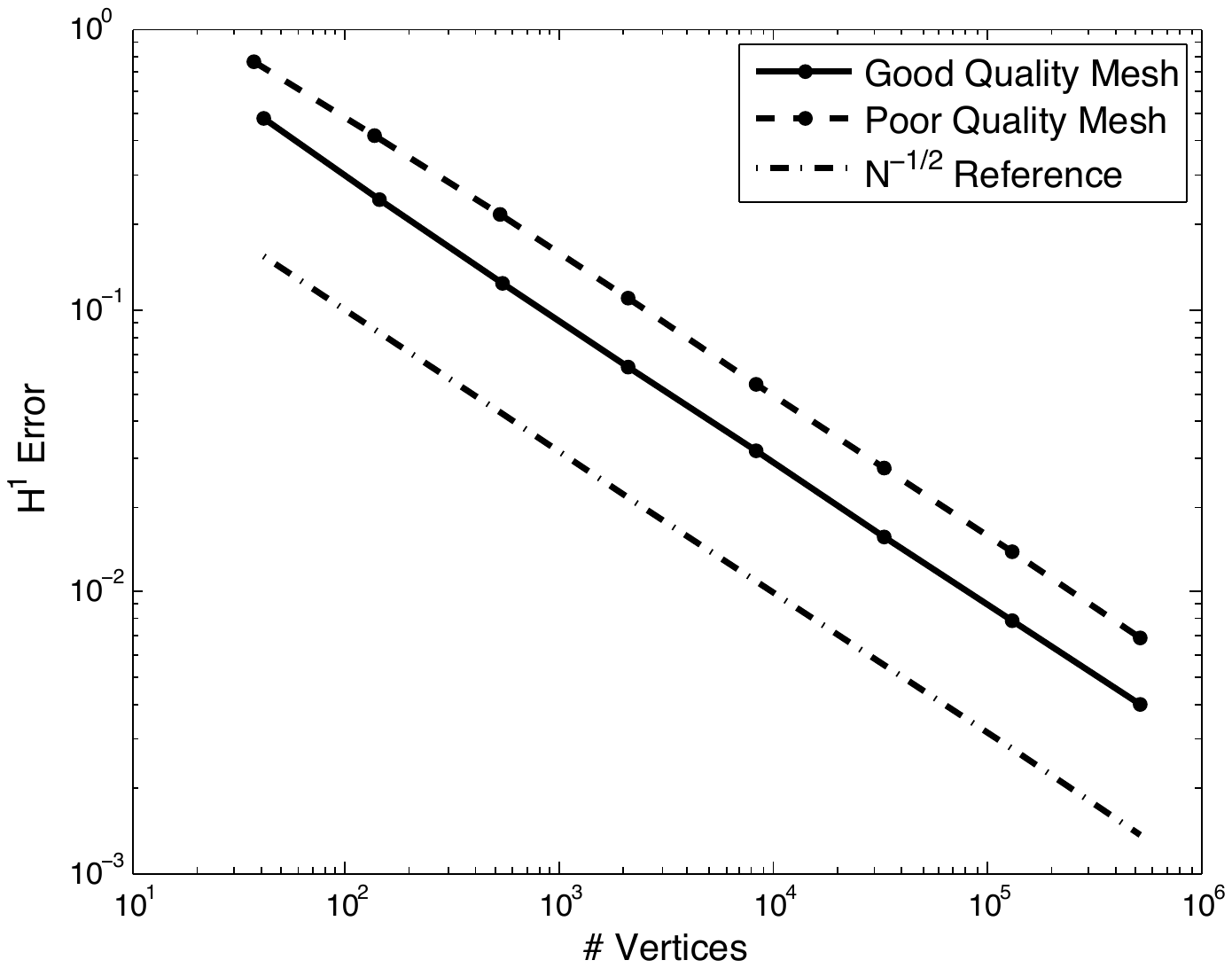}
\includegraphics[width=2.8in]{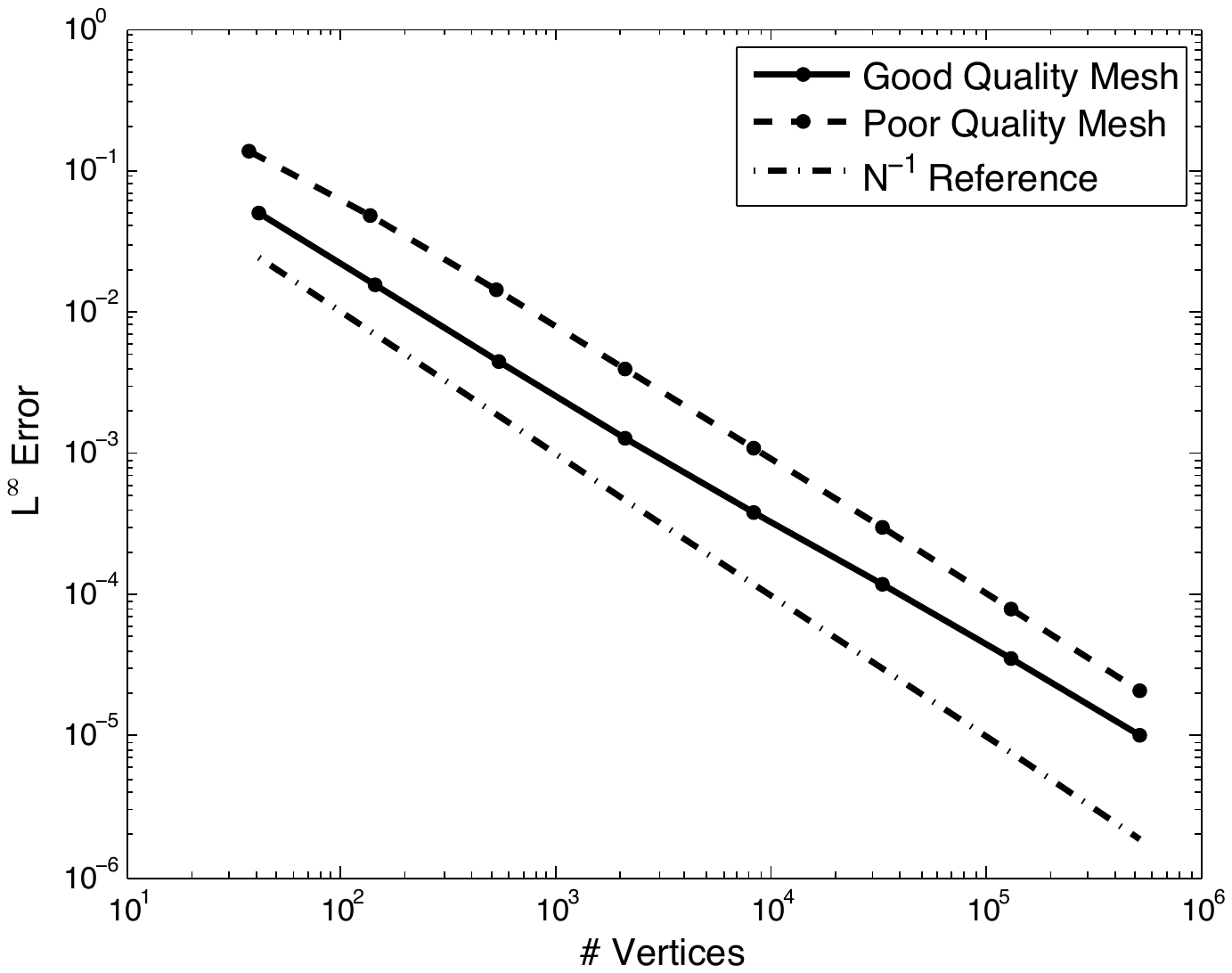}
\caption{Convergence in $H^1$ and $L^\infty$ norms for the 
two-dimensional example problem. \label{fig:2d_error}}
\end{figure}

\begin{figure}[htp]
\includegraphics[width=2.8in]{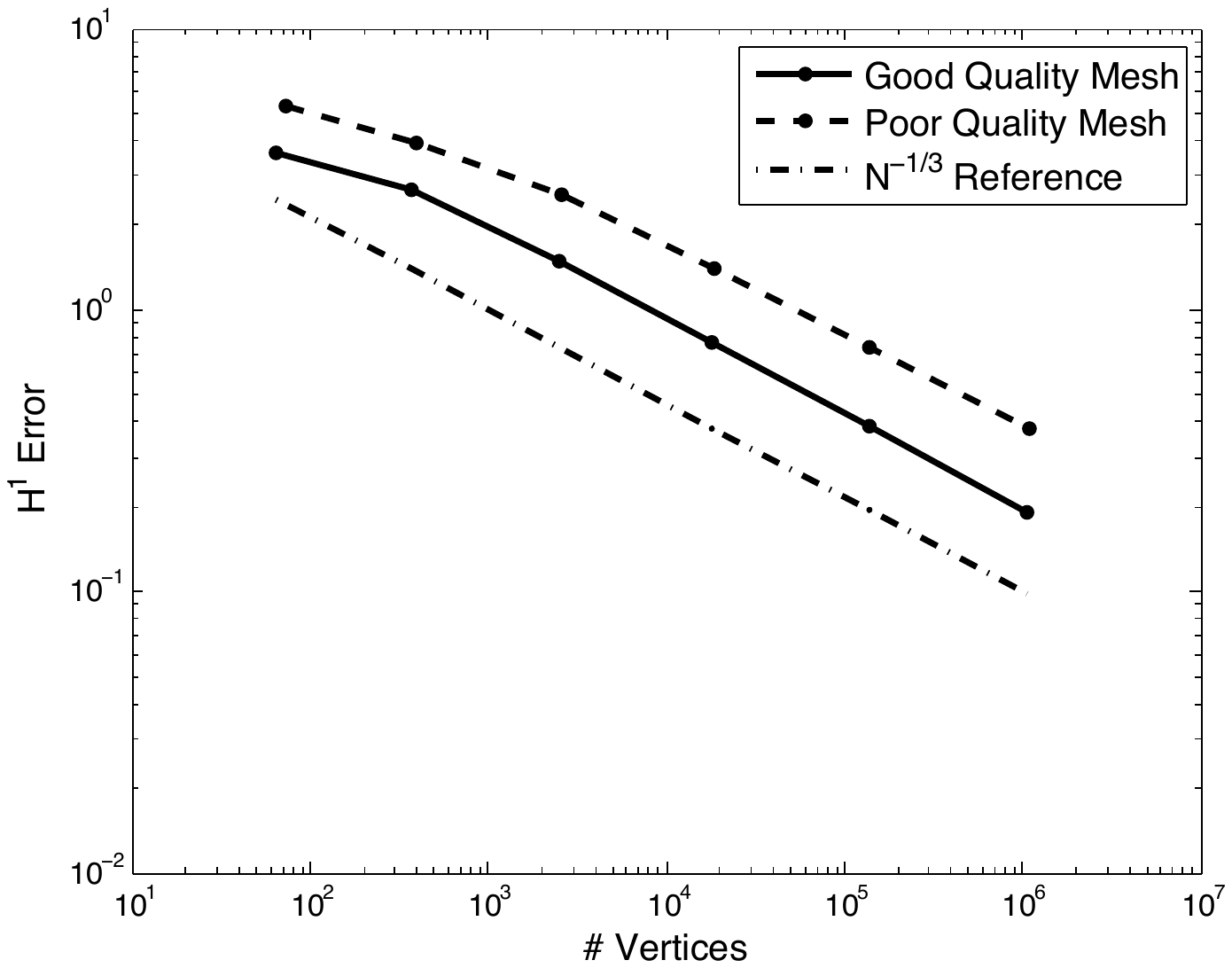}
\includegraphics[width=2.8in]{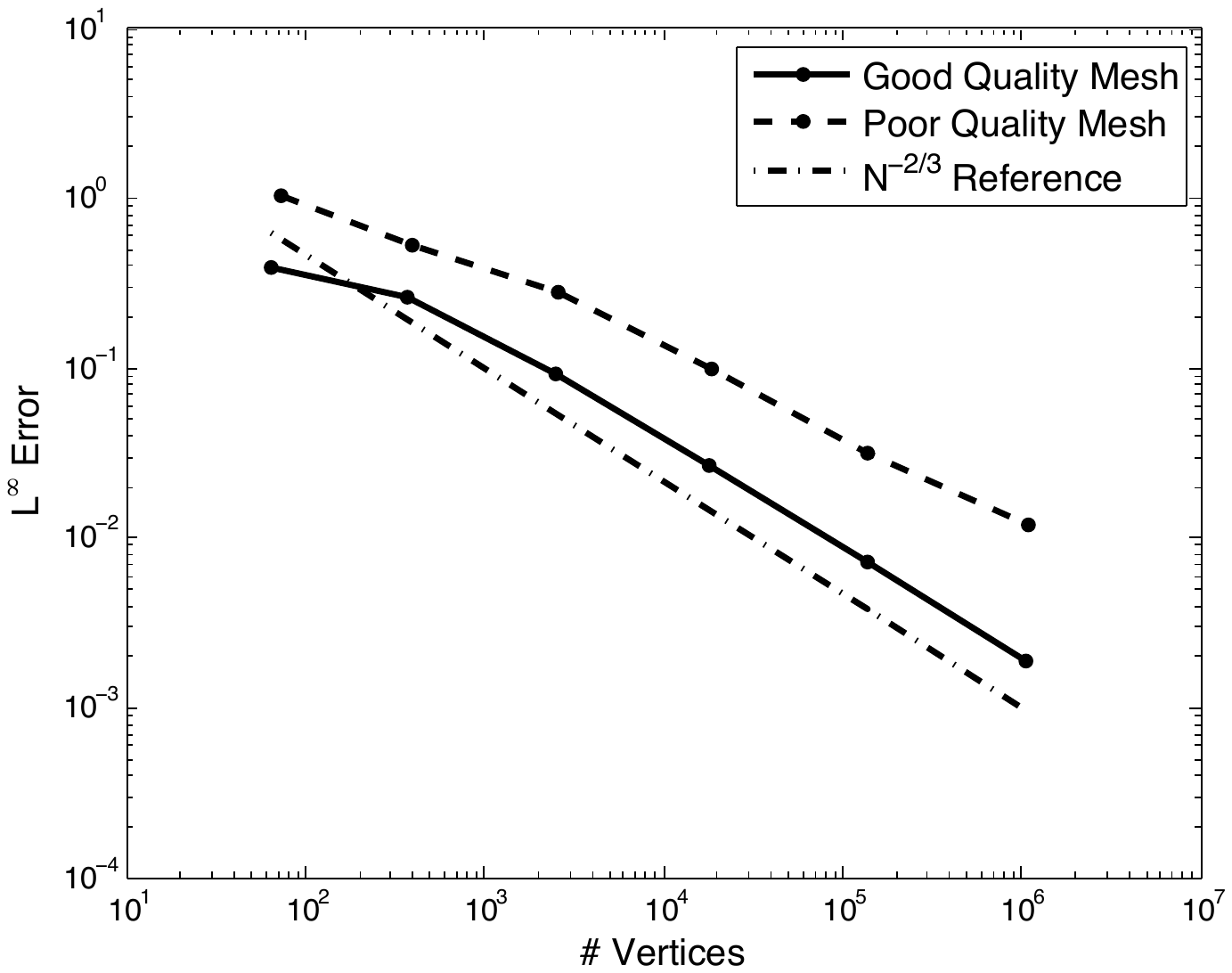}
\caption{Convergence in $H^1$ and $L^\infty$ norms for the
three-dimensional example problem. \label{fig:3d_error}}
\end{figure}

We also run a separate set of experiments in 2D in order to study \eqref{eqn:linfty-disc} in Corollary~\ref{cor:linfty}.
Specifically, starting with an initial good quality mesh, we compute the discrete solutions on successively worse meshes (produced using shortest edge bisections). Although we do not expect that the discrete solutions converge to the exact solution, we do hope that the $L^\infty$ norm of the discrete solution remains bounded.
Figure~\ref{fig:linf_norm} shows the $L^\infty$ norm of the discrete solution plotted against the size of the smallest angle in the mesh. This result confirms that the $L^{\infty}$ norm of discrete solutions are uniformly bounded as predicted in Corollary~\ref{cor:linfty}.
\begin{figure}[h]
\includegraphics[width=2.8in]{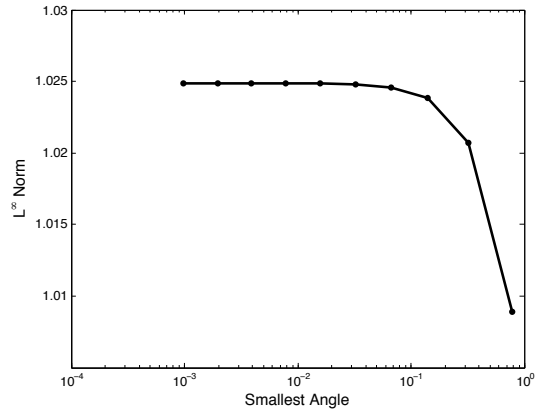}
\caption{Boundedness of the $L^\infty$ norm of the discrete solution
  computed on successively worse meshes. \label{fig:linf_norm}}
\end{figure}

\section{Conclusion}
\label{sec:conc}

In this article we considered {\em a priori} error estimates for a class of 
semilinear problems with certain growth condition, which includes problems with critical and subcritical polynomial nonlinearity in $d$ space dimensions.
Our motivation was that, while it is well-understood how mesh geometry
impacts finite element interpolant quality (at least for $d=2$ and $d=3$).
much more restrictive conditions on angles are needed to derive basic 
{\em a priori} quasi-optimal error estimates as well as {\em a priori} 
pointwise estimates for Galerkin approximations.
These angle conditions, which are particularly difficult to satisfy in 
three dimensions in any type of unstructured or adaptive setting,
are needed in order to gain pointwise control of the nonlinearity
through discrete maximum/minimum principles.
Our goal in the article was to show how to derive these types of
{\em a priori} estimates without requiring the discrete maximum/minimum principles,
hence eliminating the need for restrictive angle conditions.

To this end, in Section~\ref{sec:pde} we described a class of semilinear 
problems, 
 and reviewed the {\em a priori} $L^{\infty}$ bounds of the continuous solution through maximum/minimum principles using the De Giorgi iterative method (or Stampacchia truncation method).
We then developed a basic quasi-optimal {\em a priori} error estimate
for Galerkin approximations in Section~\ref{sec:fem}, where the nonlinearity 
was controlled by using only a local Lipschitz property rather than through 
pointwise control of the discrete solution. In this way, we avoid of using discrete maximum principle, which requires certain angle conditions. 
In particular, we showed that the local Lipschitz property in fact holds for nonlinearities satisfying certain growth condition, which includes the critical exponent cases. 
We then used some well-known results in finite element approximation theory
in Section~\ref{sec:disc-infty} to show that (under some minimal smoothness
assumptions) that the {\em a priori} error estimate is itself enough to give
$L^{\infty}$ control the discrete solution, without the need for restrictive
angle conditions that would be required to obtain a discrete maximum principle.


\section{Acknowledgments}
   \label{sec:ack}

MH was supported in part by NSF Awards~0715146 and 0915220,
and by DOD/DTRA Award HDTRA-09-1-0036.
RB was supported in part by NSF Award~0915220.
RS and YZ were supported in part by NSF Award~0715146.

\bibliographystyle{abbrv}
\bibliography{../bib/books,../bib/papers,../bib/mjh,../bib/library,../bib/ref-gn,../bib/coupling,../bib/pnp}


\vspace*{0.1cm}

\end{document}